\theoremstyle{definition}
\newtheorem{definition}{Definition}[section]
\newtheorem{conjecture}[definition]{Conjecture}
\newtheorem{example}[definition]{Example}
\newtheorem{remark}[definition]{Remark}
\theoremstyle{plain}
\newtheorem{lemma}[definition]{Lemma}
\newtheorem{theorem}[definition]{Theorem}
\newcommand{\rational}{\mathbb{Q}}
\begin{document}

\title{Special Identities for Comtrans Algebras}

\author{Murray R. Bremner}

\address{Department of Mathematics and Statistics, University of Saskatchewan, Saskatoon, Canada}

\email{bremner@math.usask.ca}

\author{Hader A. Elgendy}

\address{Faculty of Science, Department of Mathematics, Damietta
University, Damietta 34517, Egypt}

\email{haderelgendy42@hotmail.com}

\subjclass[2010]{Primary 17A40. Secondary 15-04, 15A21, 15A69, 15B36, 18D50, 20C30, 68W30}

\keywords{Comtrans algebras, trilinear operations, 
polynomial identities, lattice basis reduction, representation theory of the symmetric group,
algebraic operads, Gr\"obner bases, universal associative enveloping algebras}

\thanks{The research of the first author was supported by the Discovery Grant 
\emph{Algebraic Operads} from NSERC, 
the Natural Sciences and Engineering Research Council of Canada.
He thanks the Department of Mathematics at Damietta University in Egypt
for its hospitality during May 2018.}

\begin{abstract}
Comtrans algebras, arising in web geometry, have two trilinear operations,
commutator and translator.
We determine a Gr\"obner basis for the comtrans operad, 
and state a conjecture on its dimension formula.
We study multilinear polynomial identities for the special commutator 
$[x,y,z] = xyz-yxz$ 
and special translator 
$\langle x, y, z \rangle = xyz-yzx$ 
in associative triple systems.
In degree 3, the defining identities for comtrans algebras generate all identities.
In degree 5, we simplify known identities for each operation
and determine new identities relating the operations.
In degree 7, we use representation theory of the symmetric group 
to show that each operation satisfies identities which do not follow from 
those of lower degree but there are no new identities relating the operations.
We use noncommutative Gr\"obner bases to construct the universal associative envelope
for the special comtrans algebra of $2 \times 2$ matrices.
\end{abstract}

\maketitle


\section{Introduction}

Comtrans algebras were introduced by Smith \cite[\S3]{Smith1988} to answer
a problem in web geometry \cite{AG,Chern} posed by Goldberg \cite[Problem X.3.9]{CPS1990}:
to find the algebraic structure on the tangent bundle of the coordinate ternary loop of a 4-web \cite[\S3.7]{G1988}.
Comtrans algebras are a common ternary generalization of Lie algebras,
Malcev algebras \cite{K} and Akivis algebras \cite{A}:
every such algebra can be given the structure of a comtrans algebra
\cite[\S5]{RS1996}.
A generalization of Lie's Third Fundamental Theorem connects formal ternary loops with
comtrans algebras \cite[\S5]{Smith1988}.
For physical applications of comtrans algebras, see \cite[p.~321]{Smith1993}:
``\dots the Lorentz metric on 4-dimensional real space-time provides a simple comtrans algebra
that extends the 3-dimensional vector triple product comtrans algebra.''

\begin{definition}
Smith \cite[\S3]{Smith1988}.
A \emph{comtrans algebra} is a vector space $A$ with two trilinear operations
$A \times A \times A \to A$,
the \emph{commutator} $[x,y,z]$ and the \emph{translator} $\langle x,y,z \rangle$,
satisfying the following polynomial identities for all $x, y, z \in A$:
\begin{align}
\label{alternating}
&
[x,y,z] + [y,x,z] = 0,
\\
\label{Jacobi}
&
\langle x,y,z \rangle + \langle y,z,x \rangle + \langle z,x,y \rangle = 0,
\\
\label{comtrans}
&
[x,y,z] + [z,y,x] = \langle x,y,z \rangle + \langle z,y,x \rangle.
\end{align}
The commutator alternates in the first two arguments \eqref{alternating},
the translator satisfies the Jacobi identity \eqref{Jacobi}, and
the operations are related by the comtrans identity \eqref{comtrans}.
\end{definition}

\begin{example}
If $T$ is a Lie triple system with bracket $[x,y,z]$ then letting both commutator and translator
equal $[x,y,z]$ gives $T$ the structure of a comtrans algebra $T^{CT}$.
If $T$ is obtained from a Lie algebra $L$ by $[x,y,z] = [[x,y],z]$ then
Shen \& Smith \cite[Theorem 3.2]{SS1993} have shown that $L$ is simple if and only if $T^{CT}$
is simple.
\end{example}

\begin{example}
Let $A_{m,n}$ denote the vector space of $m \times n$ matrices over $\mathbb{F}$.
Fix matrices $p$ ($n \times n$) and $q$ ($m \times m$) over $\mathbb{F}$.
Define a trilinear operation on $A_{m,n}$ by $(x,y,z) = x p y^t q z$.
Setting $[x,y,z] = (x,y,z) - (y,x,z)$ and $\langle x,y,z \rangle = (x,y,z) - (y,z,x)$
gives $A_{m,n}$ the structure of a comtrans algebra \cite[Example 2.1]{SS1993}.
\end{example}

\begin{definition}
\label{ATS}
An \emph{associative triple system} \cite{Lister} is a vector space $A$ with a trilinear operation
$xyz$ satisfying $(vw(xyz)) = (v(wxy)z) = (vw(xyz))$.
The \emph{special commutator} and \emph{special translator} in $A$ are
$[x,y,z] = xyz - yxz$ and $\langle x,y,z \rangle = xyz - yzx$.
\end{definition}

\begin{remark}
In the terminology of \cite{BPCTO}, two trilinear operations in $\mathbb{Q}S_3$ are
equivalent if they generate the same left ideal.
The special commutator is equivalent to the $q = 2$ case
of the $q$-deformed anti-Jordan triple product.
The special translator is the same as the cyclic commutator, and is equivalent to
the operation $2xyz - yzx - zxy$ which represents to the identity matrix $I_2$ in
the simple 2-sided ideal corresponding to partition $2{+}1$.
\end{remark}

We restrict consideration to multilinear polynomial identities, since this allows us 
to apply the representation theory of the symmetric group.
This approach to polynomial identities was introduced 
by Malcev \cite{Malcev} and Specht \cite{Specht}; 
its computer implementation was pioneered by Hentzel \cite{H1,H2}.
This point of view also fits very naturally into the theory of algebraic operads \cite{BD,LV,MSS}.
For an introduction to operads which emphasizes connections to homotopical algebra,
see Vallette \cite{Vallette}.

In \S\ref{prelim} we recall basic definitions on triple systems.
In \S\ref{operadsection} we show how algebraic operads may be used to discuss polynomial identities.
In particular, we calculate a Gr\"obner basis for the comtrans operad, and 
explain how Gr\"obner bases of operads may be used to compute polynomial identities.
In \S\ref{degree3section} we show that every identity in degree 3 for the commutator and translator 
follows from \eqref{alternating}--\eqref{comtrans}.
In \S\ref{degree5section} we use computer algebra to find explicit generators for the $S_5$-module
of identities which do not follow from those of degree 3.
We consider three cases: the commutator by itself, the translator by itself, 
and identities in which each term contains both operations.
In \S\ref{degree7section} we use a constructive version of the representation theory of the
symmetric group to demonstrate that there are new identities in degree 7 for each operation
separately but no new identities relating the operations.
In \S\ref{envelopesection} we construct the universal associative enveloping algebra of 
the special comtrans algebra $M^{CT}$ obtained from the associative triple system $M$ of $2 \times 2$ matrices.

Our results are valid over any field of characteristic 0.
Our computations were performed using Maple worksheets written by the authors.


\section{Preliminaries}
\label{prelim}

\begin{lemma}
In an associative triple system $A$, the special commutator and special translator
satisfy the relations \eqref{alternating}--\eqref{comtrans}.
\end{lemma}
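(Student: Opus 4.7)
My plan is to verify each of the three identities \eqref{alternating}, \eqref{Jacobi}, \eqref{comtrans} by direct substitution of the definitions $[x,y,z] = xyz - yxz$ and $\langle x,y,z\rangle = xyz - yzx$, and then observing that the resulting expressions telescope to zero. No use of the associative triple system axioms is required at all: the three comtrans identities are consequences purely of the \emph{form} of the expressions that define the special commutator and translator, regardless of how the underlying trilinear product behaves. (The triple system axioms become relevant only in higher-degree identities, where nested products must be rewritten.)

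Concretely, for \eqref{alternating} I would write $[x,y,z] + [y,x,z] = (xyz - yxz) + (yxz - xyz)$ and note that the four terms cancel pairwise. For \eqref{Jacobi} I would expand
\begin{equation*}
\langle x,y,z\rangle + \langle y,z,x\rangle + \langle z,x,y\rangle
= (xyz - yzx) + (yzx - zxy) + (zxy - xyz),
\end{equation*}
which is a telescoping sum collapsing to $0$. For \eqref{comtrans} I would compute
\begin{equation*}
[x,y,z] + [z,y,x] = xyz - yxz + zyx - yzx,
\end{equation*}
\begin{equation*}
\langle x,y,z\rangle + \langle z,y,x\rangle = xyz - yzx + zyx - yxz,
\end{equation*}
and then observe that the two right-hand sides are identical (up to reordering the four monomials).

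There is no real obstacle here, since each identity is a one-line verification in the free $\mathbb{F}$-module on the six monomials $xyz, xzy, yxz, yzx, zxy, zyx$. The only thing worth remarking on is that the comtrans identity \eqref{comtrans} is the only one of the three that genuinely mixes the two operations, and the check shows that the cross-terms $yxz$ and $yzx$ appear on both sides with identical signs, which is precisely why the identity holds identically on any trilinear product, not merely on associative triple systems.
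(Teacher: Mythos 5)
Your verification is correct and is exactly the ``trivial calculation'' the paper invokes; your observation that the associativity axioms are never used matches the paper's remark that the result holds without any assumption of associativity (citing Smith).
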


\begin{proof}
Trivial calculation.
This result appears in a more general setting and without any assumption of associativity 
in \cite[(3.4)-(3.6)]{Smith1988}.
\end{proof}

\begin{definition}
Let $M$ be an associative triple system and let $M^{CT}$ be the comtrans algebra defined on 
the underlying vector space by the special commutator and special translator.
We say that a comtrans algebra $A$ is \emph{special} if there exists an associative triple system
$M$ and an injective morphism $A \to M^{CT}$ of comtrans algebras; otherwise, $A$ is \emph{exceptional}.
(This terminology is motivated by the definitions of special and exceptional Jordan algebras \cite{MC}.)
\end{definition}

\begin{remark}
Tercom (ternary commutator) algebras were introduced by Rossmanith \& Smith \cite{RS1996}.
They have trilinear operations $\lambda(x,y,z)$ and $\rho(x,y,z)$ which satisfy the following
polynomial identities:
\begin{align*}
&
\lambda(x,y,z) + \lambda(y,x,z) = 0,
\\
&
\rho(x,y,z) + \rho(x,z,y) = 0,
\\
&
\lambda(x,y,z) +
\lambda(y,z,x) +
\lambda(z,x,y)
=
\rho(x,y,z) +
\rho(y,z,x) +
\rho(z,x,y).
\end{align*}
These identities hold for the left and right commutators
$\lambda(x,y,z) = xyz - yxz$ and $\rho(x,y,z) = xyz - xzy$ in every associative triple system.
Tercom algebras are equivalent to comtrans algebras in the sense that
\begin{align*}
&
[x,y,z] = \lambda(x,y,z),
\\
&
\langle x,y,z \rangle = \lambda(x,y,z) + \rho(y,x,z),
\\
&
\rho(x,y,z) = \langle y,x,z \rangle - [y,x,z].
\end{align*}
\end{remark}

\begin{remark}
\label{wacremark}
The weakly anticommutative operation \cite{BPCTO}, which is equivalent to the operation
$\{x,y,z\} = xyz + xzy - 2 zyx$,
satisfies the symmetric sum identity in every associative triple system:
\[
\sum_{\sigma \in S_3} \{ x^\sigma, y^\sigma, z^\sigma \} = 0.
\]
This allows us to define special comtrans algebras in terms of a single operation:
\begin{align*}
[x,y,z] &= \tfrac12 \big( \{z,x,y\} - \{z,y,x\} \big),
\\
\langle x,y,z \rangle &= \tfrac16 \big( 4\{x,z,y\} + 2\{y,z,x\} + \{z,x,y\} - \{z,y,x\} \big),
\\
\{x,y,z\} &= [y,z,x] + \langle x,y,z \rangle + \langle x,z,y \rangle.
\end{align*}
\end{remark}

\begin{remark}
Every special comtrans algebra becomes a special anti-Jordan triple system
using the trilinear operation $(x,y,z) = [x,y,z] - \langle z,y,x \rangle = xyz - zyx$.
\end{remark}


\section{Algebraic operads}
\label{operadsection}

We consider operads in the symmetric monoidal category of vector spaces over
a field of characteristic 0;
equivalently, $\mathbb{Z}$-graded vector spaces concentrated in degree 0.
We say \emph{degree} instead of \emph{arity} since our motivation comes
from nonassociative algebra and we never refer to the homological degree
(that is, all operations have homological degree 0).


\subsection{Basic definitions}

\newcommand{\mygamma}{\raisebox{0.0pt}{{\raisebox{+0.0pt}{\scalebox{1.0}{$\circ$}}}}}
\newcommand{\mydelta}{\raisebox{0.0pt}{{\raisebox{-0.0pt}{\scalebox{1.0}{$\bullet$}}}}}

A monomial of weight $w$ (and hence degree $d(w) = 2w{+}1$) in two trilinear operations 
$\gamma$ and $\delta$ is a sequence of $d(w)$ distinct arguments (usually identified with 
a permutation of the variables $x_1, \dots, x_{d(w)}$) into which $w$ operation symbols 
have been inserted (each symbol may be either $\gamma$ or $\delta$).
There is a bijection between monomials of weight $w$ and complete ternary trees with $w$ 
internal nodes each labelled by one of the operation symbols, and $d(w)$ leaves labelled 
by distinct arguments.
For example, writing $\circ$ for $\gamma$ and $\bullet$ for $\delta$,
\[
\gamma( x_3, \delta( x_1, x_4, x_5 ), \gamma( x_9, \delta( x_6, x_8, x_7 ), x_2 ) )
\quad\longleftrightarrow\quad
\adjustbox{valign=m}{
\begin{xy}
(   0,   0 )*{\mygamma} = "root";
( -15,  -6 )*{x_3} = "l";
(   0,  -6 )*{\mydelta} = "m";
(  15,  -6 )*{\mygamma} = "r";
{ \ar@{-} "root"; "l" };
{ \ar@{-} "root"; "m" };
{ \ar@{-} "root"; "r" };
( -5, -12 )*{\raisebox{-5.0pt}{$x_1$}} = "ml";
(  0, -12 )*{x_4} = "mm";
(  5, -12 )*{x_5} = "mr";
{ \ar@{-} "m"; "ml" };
{ \ar@{-} "m"; "mm" };
{ \ar@{-} "m"; "mr" };
( 10, -12 )*{x_9} = "rl";
( 15, -12 )*{\mydelta} = "rm";
( 20, -12 )*{x_2} = "rr";
{ \ar@{-} "r"; "rr" };
{ \ar@{-} "r"; "rm" };
{ \ar@{-} "r"; "rl" };
( 10, -18 )*{x_6} = "rml";
( 15, -18 )*{x_8} = "rmm";
( 20, -18 )*{x_7} = "rmr";
{ \ar@{-} "rm"; "rml" };
{ \ar@{-} "rm"; "rmm" };
{ \ar@{-} "rm"; "rmr" };
\end{xy}
}
\]

\smallskip

\begin{definition}
\label{defoperads}
Let $\mathcal{T}$ be the free weight-graded symmetric operad generated by ternary operations
$\gamma$, $\delta$ with no symmetry, 
and let $\mathcal{T}(w)$ be the homogeneous subspace of weight $w$.
Let $\mathcal{A}$ be the weight-graded symmetric operad generated by a ternary operation
$\alpha$ with no symmetry satisfying ternary associativity (Definition \ref{ATS}), 
and let $\mathcal{A}(w)$ be the homogeneous subspace of weight $w$.
Using the formula for ternary Catalan numbers, we have
\[
\dim\mathcal{T}(w) = \frac{2^w}{2w{+}1} \binom{3w}{w} (2w{+}1)!,
\qquad\qquad
\dim\mathcal{A}(w) = (2w{+}1)!.
\]
Since we use the weight grading, $\mathcal{T}(w)$ and $\mathcal{A}(w)$ are modules over $S_{2w+1}$.
\end{definition}

\begin{definition}
\label{expansionmap}
Since $\mathcal{T}$ is free, a morphism with domain $\mathcal{T}$ is determined
by its values on $\gamma$ and $\delta$.
We define the \emph{expansion map} $X\colon \mathcal{T} \to \mathcal{A}$ by
\[
X( \gamma ) = \alpha - (12) \cdot \alpha,
\quad
X( \delta ) = \alpha - (132) \cdot \alpha,
\quad
p \cdot x_1 x_2 x_3 = x_{p^{-1}(1)} x_{p^{-1}(2)} x_{p^{-1}(3)}.
\]
Permutations act on positions of arguments (not subscripts).
Thus
\[
X(\gamma)(x,y,z) = \alpha(x,y,z) - \alpha(y,x,z),
\qquad
X(\delta)(x,y,z) = \alpha(x,y,z) - \alpha(y,z,x).
\]
We use the more convenient notation
\[
\gamma(x,y,z) = [x,y,z],
\qquad
\delta(x,y,z) = \langle x,y,z \rangle,
\qquad
\alpha(x,y,z) = xyz.
\]
Thus $X$ may be written in terms of the special ternary commutator and translator:
\[
[x,y,z] \;\xrightarrow{\quad X \quad}\; xyz - yxz,
\qquad\qquad
\langle x,y,z \rangle \;\xrightarrow{\quad X \quad}\; xyz - yzx.
\]
We write $X_w$ for the restriction of $X$ to $\mathcal{T}(w)$.
\end{definition}

\begin{lemma}
We have $X_w( \mathcal{T}(w) ) \subseteq \mathcal{A}(w)$ for $w \ge 0$.
The kernel of $X_w$ is the $S_{2w+1}$-submodule of $\mathcal{T}(w)$ of
multilinear polynomial identities of degree $2w{+}1$ satisfied by
the special commutator and translator in every associative triple system.
\end{lemma}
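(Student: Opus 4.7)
The plan is to combine the universal property of the free operad $\mathcal{T}$ with the fact that $\mathcal{A}(w)$ faithfully parametrizes multilinear operations of degree $2w{+}1$ in every associative triple system.

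For the inclusion $X_w(\mathcal{T}(w)) \subseteq \mathcal{A}(w)$, I would invoke the universal property of $\mathcal{T}$. Since $\mathcal{T}$ is free on the weight-$1$ generators $\gamma,\delta$ (with no symmetry), the assignment $\gamma \mapsto \alpha - (12)\cdot\alpha$ and $\delta \mapsto \alpha - (132)\cdot\alpha$ extends uniquely to a morphism of weight-graded symmetric operads $X\colon \mathcal{T}\to\mathcal{A}$. Any such morphism preserves both the weight grading and the $S_n$-actions on each component, so its restrictions $X_w\colon\mathcal{T}(w)\to\mathcal{A}(w)$ are well-defined $S_{2w+1}$-equivariant linear maps, and $\ker X_w$ is automatically an $S_{2w+1}$-submodule of $\mathcal{T}(w)$.

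For the kernel characterization, I would argue via evaluation. An element $t\in\mathcal{T}(w)$ corresponds to a multilinear polynomial of degree $2w{+}1$ in $\gamma$ and $\delta$, and by the definition of $X$ on generators, its evaluation in the special comtrans algebra $M^{CT}$ attached to an associative triple system $M$, at any tuple $(a_1,\dots,a_{2w+1}) \in M^{2w+1}$, via $[x,y,z]=xyz-yxz$ and $\langle x,y,z\rangle=xyz-yzx$ coincides with the evaluation of $X_w(t) \in \mathcal{A}(w)$ at the same tuple under the associative ternary product of $M$. Hence $t$ is a multilinear identity for the special commutator and translator in every associative triple system if and only if $X_w(t)$ vanishes as an operation in every such system.

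The main (though standard) obstacle is the remaining claim: an element $a \in \mathcal{A}(w)$ that vanishes as an operation in every associative triple system must equal $0$ in $\mathcal{A}(w)$. This is the universal property of $\mathcal{A}$ and reflects the identification $\mathcal{A}(w) \cong \mathbb{Q}[S_{2w+1}]$ with basis the permutations $x_{\sigma(1)} \cdots x_{\sigma(2w+1)}$ of any fixed (and, by associativity, unambiguous) bracketing of the $(2w{+}1)$-fold ternary product; these basis elements take linearly independent values in the free associative triple system on $\{x_1,\dots,x_{2w+1}\}$, so any nonzero $a$ already produces a nonzero operation on its own generators. Combining the two implications yields $t\in\ker X_w$ if and only if $t$ is an identity of the stated form, completing the proof.
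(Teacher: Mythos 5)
Your proposal is correct, and it supplies in full the routine verification that the paper omits: the lemma is stated there without proof, being regarded as immediate from the definitions of $X$, of the weight grading, and of the operad $\mathcal{A}$. Your two key points --- that $X$ preserves weight and $S_{2w+1}$-equivariance by the universal property of the free operad, and that $\mathcal{A}(w)\cong\mathbb{Q}S_{2w+1}$ acts faithfully because the permutation monomials are linearly independent in the free associative triple system --- are exactly the facts the authors are implicitly using.
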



\subsection{A Gr\"obner basis for the comtrans operad}
\label{GBCO}

The notion of Gr\"obner basis is not well-defined in general for ideals in symmetric operads;
see for example \cite[Proposition 5.2.2.5]{BD}.
In order to apply the algorithm for computing a Gr\"obner basis to 
a symmetric operad $\mathcal{P}$, 
we must first apply the forgetful functor from symmetric operads to shuffle operads;
equivalently, we must ignore the symmetric group actions%
\footnote{We thank Vladimir Dotsenko for emphasizing the importance of the forgetful functor 
from symmetric operads to shuffle operads in this subsection and the next.}.
We then compute a Gr\"obner basis of the corresponding ideal in the shuffle operad 
$\mathcal{P}_{\text{sh}}$; see for example \cite[Example 5.3.4.3]{BD}.
Shuffle operads and their Gr\"obner bases were introduced by Dotsenko \& Khoroshkin \cite{DK};
for a more detailed exposition, see \cite[Chapter 5]{BD}).

In the free weight-graded symmetric operad $\mathcal{T}$ of Definition \ref{defoperads}, 
the homogeneous subspace $\mathcal{T}(1)$ has the following ordered basis of ternary operations:
\begin{equation}
\label{basis12}
\begin{array}{cccccc}
\langle x, y, z \rangle, &
\langle x, z, y \rangle, &
\langle y, x, z \rangle, &
\langle y, z, x \rangle, &
\langle z, x, y \rangle, &
\langle z, y, x \rangle, 
\\ {}
[ x, y, z ], &
[ x, z, y ], &
[ y, x, z ], &
[ y, z, x ], &
[ z, x, y ], &
[ z, y, x ].
\end{array}
\end{equation}
As a symmetric operad, $\mathcal{T}(1)$ has the natural $S_3$-module structure
where $S_3$ acts by permuting the variables $x, y, z$.
The free symmetric operad $\mathcal{T}$ is generated by this 12-dimensional $S_3$-module
in degree 3 (weight 1).
Note also that $\mathcal{T}(1)$ is generated as an $S_3$-module by the two operations
$\langle x, y, z \rangle$ and $[ x, y, z ]$.

We now apply the forgetful functor to $\mathcal{T}$ to obtain the free shuffle operad
$\mathcal{T}_{\text{sh}}$.
The homogeneous subspace $\mathcal{T}_{\text{sh}}(1)$ has the same basis \eqref{basis12},
but we have forgotten the action of $S_3$: 
that is, $\mathcal{T}_{\text{sh}}(1)$ is not an $S_3$-module, it is simply a vector space.
Hence the free shuffle operad $\mathcal{T}_{\text{sh}}$ is generated by the same 
12-dimensional space, but now we must regard each of the basis monomials \eqref{basis12}
as a distinct operation.
For the most general statement of this isomorphism, see \cite[Corollary 5.3.3.3]{BD}.

Let $\mathcal{I} \subset \mathcal{T}$ be the (weight-graded) ideal in the free symmetric operad 
$\mathcal{T}$ such that the quotient operad $\mathcal{CT} = \mathcal{T} / \mathcal{I}$ is 
the symmetric comtrans operad.
The ideal $\mathcal{I}$ is generated by its homogeneous component $\mathcal{I}(1)$,
which in turn is generated as an $S_3$-module by relations 
\eqref{alternating}--\eqref{comtrans} defining comtrans algebras. 
In order to determine the homogeneous component $\mathcal{I}_{\text{sh}}(1)$ of the ideal
$\mathcal{I}_{\text{sh}}$ in the corresponding shuffle operad $\mathcal{T}_{\text{sh}}$,
we must first find a linear basis of $\mathcal{I}(1)$.

\begin{figure}[ht]
\small
$\left[
\begin{array}{r@{\,}r@{\,}r@{\,}r@{\,}r@{\,}r|r@{\;\;}r@{\;\;}r@{\;\;}r@{\;\;}r@{\;\;}r}
\cdot & \cdot & \cdot & \cdot & \cdot & \cdot & 1 & \cdot & 1 & \cdot & \cdot & \cdot \\[-1mm]
\cdot & \cdot & \cdot & \cdot & \cdot & \cdot & \cdot & 1 & \cdot & \cdot & 1 & \cdot \\[-1mm]
\cdot & \cdot & \cdot & \cdot & \cdot & \cdot & 1 & \cdot & 1 & \cdot & \cdot & \cdot \\[-1mm]
\cdot & \cdot & \cdot & \cdot & \cdot & \cdot & \cdot & \cdot & \cdot & 1 & \cdot & 1 \\[-1mm]
\cdot & \cdot & \cdot & \cdot & \cdot & \cdot & \cdot & 1 & \cdot & \cdot & 1 & \cdot \\[-1mm]
\cdot & \cdot & \cdot & \cdot & \cdot & \cdot & \cdot & \cdot & \cdot & 1 & \cdot & 1 \\
\midrule
1 & \cdot & \cdot & 1 & 1 & \cdot & \cdot & \cdot & \cdot & \cdot & \cdot & \cdot \\[-1mm]
\cdot & 1 & 1 & \cdot & \cdot & 1 & \cdot & \cdot & \cdot & \cdot & \cdot & \cdot \\[-1mm]
\cdot & 1 & 1 & \cdot & \cdot & 1 & \cdot & \cdot & \cdot & \cdot & \cdot & \cdot \\[-1mm]
1 & \cdot & \cdot & 1 & 1 & \cdot & \cdot & \cdot & \cdot & \cdot & \cdot & \cdot \\[-1mm]
1 & \cdot & \cdot & 1 & 1 & \cdot & \cdot & \cdot & \cdot & \cdot & \cdot & \cdot \\[-1mm]
\cdot & 1 & 1 & \cdot & \cdot & 1 & \cdot & \cdot & \cdot & \cdot & \cdot & \cdot \\
\midrule
-1 & \cdot & \cdot & \cdot & \cdot & -1 & 1 & \cdot & \cdot & \cdot & \cdot & 1 \\[-1mm]
\cdot & -1 & \cdot & -1 & \cdot & \cdot & \cdot & 1 & \cdot & 1 & \cdot & \cdot \\[-1mm]
\cdot & \cdot & -1 & \cdot & -1 & \cdot & \cdot & \cdot & 1 & \cdot & 1 & \cdot \\[-1mm]
\cdot & -1 & \cdot & -1 & \cdot & \cdot & \cdot & 1 & \cdot & 1 & \cdot & \cdot \\[-1mm]
\cdot & \cdot & -1 & \cdot & -1 & \cdot & \cdot & \cdot & 1 & \cdot & 1 & \cdot \\[-1mm]
-1 & \cdot & \cdot & \cdot & \cdot & -1 & 1 & \cdot & \cdot & \cdot & \cdot & 1
\end{array}
\right]
\xrightarrow{\;\mathrm{RCF}\;}
\left[
\begin{array}{r@{\;\;}r@{\;\;}r@{\;\;}r@{\;\;}r@{\,}r|r@{\;\;}r@{\,}r@{\,}r@{\,}r@{\,}r}
1 & \cdot & \cdot & \cdot & \cdot & 1 & \cdot & \cdot & \cdot & 1 & 1 & -1 \\[-1mm]
\cdot & 1 & \cdot & \cdot & \cdot & -1 & \cdot & \cdot & 1 & -1 & \cdot & 1 \\[-1mm]
\cdot & \cdot & 1 & \cdot & \cdot & -1 & \cdot & \cdot & \cdot & -1 & -1 & 1 \\[-1mm]
\cdot & \cdot & \cdot & 1 & \cdot & 1 & \cdot & \cdot & \cdot & \cdot & \cdot & \cdot \\[-1mm]
\cdot & \cdot & \cdot & \cdot & 1 & 1 & \cdot & \cdot & -1 & 1 & \cdot & -1 \\[-1mm]
\cdot & \cdot & \cdot & \cdot & \cdot & \cdot & 1 & \cdot & \cdot & 1 & 1 & \cdot \\[-1mm]
\cdot & \cdot & \cdot & \cdot & \cdot & \cdot & \cdot & 1 & 1 & \cdot & \cdot & 1
\end{array}
\right]$
\vspace{-2mm}
\caption{Computation of a Gr\"obner basis for the comtrans operad}
\label{18x12matrix}
\end{figure}

We form the $18 \times 12$ matrix (Figure \ref{18x12matrix}, left) 
whose three $6 \times 12$ blocks contain the coefficient vectors of all permutations of 
relations \eqref{alternating}--\eqref{comtrans} with respect to the ordered basis \eqref{basis12}.
The row space of this relation matrix may be identified with the $S_3$-module $\mathcal{I}(1)$.
The homogeneous subspace $\mathcal{I}_{\text{sh}}(1)$ of the shuffle ideal $\mathcal{I}_{\text{sh}}$
is the same row space, regarded simply as a vector space, forgetting the action of $S_3$.

Relations \eqref{alternating}--\eqref{comtrans} are linear (weight 1) relations between 
ternary operations.
For linear relations, the computation of a Gr\"obner basis for the shuffle ideal
$\mathcal{I}_{\text{sh}}$ reduces to applying Gaussian elimination to the relation matrix.
We compute the RCF (row canonical form) of the relation matrix (Figure \ref{18x12matrix}, right).
The nonzero rows of the RCF are (the coefficient vectors of) the Gr\"obner basis for 
$\mathcal{I}_{\text{sh}}$.
These rows represent the following relations among the 12 operations \eqref{basis12} which 
generate the free shuffle operad $\mathcal{T}_{\text{sh}}$, 
and this is the required Gr\"obner basis:
\begin{align*}
&
  \langle x, y, z \rangle
+ \langle z, y, x \rangle
+ [ y, x, z ]
- [ z, y, x ],
\\[-1mm]
&
  \langle x, z, y \rangle
- \langle z, x, y \rangle
+ \langle z, y, x \rangle
+ [ y, x, z ]
+ [ z, x, y ],
\\[-1mm]
&
  \langle y, x, z \rangle
+ \langle z, x, y \rangle
- [ y, x, z ]
- [ z, x, y ],
\\[-1mm]
&
  \langle y, z, x \rangle
+ \langle z, x, y \rangle
- \langle z, y, x \rangle
- [ y, x, z ]
+ [ z, y, x ],
\\[-1mm]
&
  [ x, y, z ]
+ [ y, x, z ],
\\[-1mm]
&
  [ x, z, y ]
+ [ z, x, y ],
\\[-1mm]
&
  [ y, z, x ]
+ [ z, y, x ].
\end{align*}
Equivalently, in terms of rewrite rules, we have:
\begin{equation}
\label{rewriterules}
\begin{array}{ll}
\langle x,y,z \rangle \; &\longrightarrow \;\;\;
- \langle z,y,x \rangle - [y,x,z] + [z,y,x],
\\
\langle x,z,y \rangle \; &\longrightarrow \;\;\;
 \langle z,x,y \rangle - \langle z,y,x \rangle - [y,x,z] - [z,x,y],
\\
\langle y,x,z \rangle \; &\longrightarrow \;\;\;
- \langle z,x,y \rangle + [y,x,z] + [z,x,y],
\\
\langle y,z,x \rangle \; &\longrightarrow \;\;\;
- \langle z,x,y \rangle + \langle z,y,x \rangle + [y,x,z] - [z,y,x],
\\ {}
[x,y,z] \; &\longrightarrow \;\;\; - [y,x,z],
\\ {}
[x,z,y] \; &\longrightarrow \;\;\; - [z,x,y],
\\ {}
[y,z,x] \; &\longrightarrow \;\;\; - [z,y,x].
\end{array}
\end{equation}
If $x$, $y$, $z$ are multilinear monomials of arbitrary degree, 
and $x \prec y \prec z$ in the deglex extension of the operation order,
then the rewrite rules show how to move greater factors to the left,
to the extent allowed by the comtrans relations.

\begin{conjecture}
If $\mathcal{CT}$ is the symmetric weight-graded comtrans operad then
\[
\dim \mathcal{CT}(w) = \frac{ (3w)! }{ w! (3!)^w } \cdot 5^w \qquad (w \ge 0).
\]
Using terminology from nonassociative algebra, 
this is the dimension of the multilinear subspace of degree $2w{+}1$
in the free comtrans algebra on $2w{+}1$ generators.
\end{conjecture}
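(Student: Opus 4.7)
My plan is to count normal-form monomials in the shuffle operad $\mathcal{CT}_{\text{sh}} = \mathcal{T}_{\text{sh}}/\mathcal{I}_{\text{sh}}$ using the Gr\"obner basis derived above, and then invoke the fact that the forgetful functor from symmetric to shuffle operads preserves the dimensions of the homogeneous components. The first step is to observe that the seven rewrite rules in \eqref{rewriterules} form a complete Gr\"obner basis of $\mathcal{I}_{\text{sh}}$: each rule has an atomic (single-internal-node) leading monomial, and the seven leading monomials are pairwise distinct, so no overlap between rules can occur and no critical pairs need to be reduced. When two rules are applied at different internal nodes of a weight-$w$ shuffle tree, the reductions commute, so no further Gr\"obner basis elements appear at higher weights.

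Consequently, a basis of $\mathcal{CT}_{\text{sh}}(w)$ is given by the normal-form shuffle trees: planar ternary shuffle trees with $w$ internal nodes and $2w{+}1$ shuffle-labeled leaves in which every internal node is decorated by one of the five operations not appearing as a LHS of a rewrite rule. Writing $T_w$ for the number of shuffle ternary tree shapes of weight $w$, this yields
\[
\dim \mathcal{CT}(w) = \dim \mathcal{CT}_{\text{sh}}(w) = 5^w \cdot T_w.
\]
To evaluate $T_w$, I would establish a bijection between arbitrarily leaf-labeled planar ternary trees with $w$ internal nodes and pairs (shuffle tree, $S_3^w$-tuple): at each internal node, sorting the children by minimum leaf label produces a shuffle tree, with the sorting permutations recorded at each node. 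Since the number of leaf-labeled planar ternary trees with $w$ internal nodes equals $F_w \cdot (2w{+}1)! = (3w)!/w!$, where $F_w = \tfrac{1}{2w+1}\binom{3w}{w}$ is the Fuss--Catalan number, we obtain $T_w = (3w)!/(w!\,(3!)^w)$. The conjectured formula
\[
\dim \mathcal{CT}(w) = \frac{(3w)!}{w!\,(3!)^w}\,5^w
\]
then follows.

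The principal technical obstacle is the Gr\"obner basis completeness claim at higher weights. Although the argument based on pairwise distinct atomic leading monomials is compelling, rewriting inside a larger shuffle tree must be handled with care: the RHS operations such as $\langle z,y,x\rangle$ must be interpreted as atomic shuffle operations (labels on internal nodes in the free shuffle operad on 12 generators), rather than as $\delta$ applied to permuted arguments, so that the shuffle structure is preserved under reduction. A rigorous verification---supplemented by explicit computer algebra checks for small $w$ to confirm that the dimension formula holds and that no new Gr\"obner basis elements are missed---would complete the proof.
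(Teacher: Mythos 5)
The paper offers no proof of this statement: it is explicitly labelled a conjecture, supported only by machine verification of the normal-form counts for $w \le 3$ and by the heuristic interpretation of the two factors. Your proposal is therefore not a variant of the paper's argument but an actual proof strategy, and it is essentially sound. The point you flag as the principal obstacle --- completeness of the Gr\"obner basis \eqref{rewriterules} in all weights --- is in fact not an obstacle. In the Dotsenko--Khoroshkin framework, S-polynomials arise only from pairs of occurrences of leading tree monomials sharing at least one internal vertex; since all seven leading monomials are single-vertex trees labelled by pairwise distinct generators from \eqref{basis12}, two such occurrences can share a vertex only by coinciding, so there are no critical pairs and the diamond lemma gives confluence outright. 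Your remark that the right-hand sides must be read as relabellings of the vertex by another of the twelve atomic shuffle generators, with the child subtrees left in shuffle order, is exactly the right way to see that reduction preserves shuffle trees and strictly decreases the vertex label, whence termination. Alternatively, you can bypass Gr\"obner bases entirely: an operadic ideal generated in the weight of the generators presents the quotient as the free (shuffle or symmetric) operad on the $5$-dimensional quotient collection $\mathcal{T}(1)/\mathcal{I}(1)$, because the free-operad functor is a left adjoint and preserves coequalizers; and the weight-$w$ component of the free symmetric operad on any $S_3$-module $U$ concentrated in arity $3$ has dimension $\tfrac{(3w)!}{w!\,6^w}(\dim U)^w$, since leaf-labelled complete ternary trees admit no nontrivial automorphisms, so only $\dim U$ matters. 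Your count of shuffle tree shapes, via the bijection sending a planar leaf-labelled tree to a shuffle tree together with the tuple of sorting permutations at the $w$ internal vertices, correctly yields the same factor $\tfrac{(3w)!}{w!\,6^w}$. Modulo writing out these routine verifications, your argument upgrades the conjecture to a theorem, which is strictly more than the paper establishes.
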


We implemented the rewrite rules in Maple to verify the first four terms 1, 5, 250, 35000
by computing all multilinear normal forms in degrees $1, 3, 5, 7$.
The factor $5^w$ comes from the number of basis operations \eqref{basis12} which are
irreducible with respect to the rewrite rules \eqref{rewriterules}.
The other factor is OEIS sequence A025035 which counts
(i)
set partitions of $\{ 1, 2, \dots, 3w \}$ into parts of size 3,
(ii)
rooted phylogenetic (non-planar) complete ternary trees with $w$ internal vertices, and
(iii)
distinct multilinear monomials in degree $2w{+}1$ for a symmetric ternary operation: 
$( x^\sigma, y^\sigma, z^\sigma ) = ( x, y, z )$ for $\sigma \in S_3$.


\subsection{Polynomial identities and operadic Gr\"obner bases}
\label{PIOGB}

In principle, we can use Gr\"obner bases to determine polynomial identities satisfied by 
the special commutator and translator in every associative triple system.
We describe the method briefly to illustrate the connection between
polynomial identities and Gr\"obner bases of operads.
(We will not use this method;
instead, we apply computational techniques based on the LLL algorithm for lattice basis reduction
\cite{BPLLL} and the representation theory of the symmetric group \cite{BMP}.)

We express the expansion map (Definition \ref{expansionmap}) in terms of a single operad.
Let $\mathcal{P}$ be the free weight-graded symmetric operad generated by the ternary operations
$\alpha$, $\delta$, $\gamma$ with no symmetries.
Let $\mathcal{O} = \mathcal{P} / \mathcal{I}$ where $\mathcal{I}$ is generated by these relations:
\begin{align}
&
\gamma(x,y,z) + \gamma(y,x,z),
\label{C1}
\\[-1mm]
&
\delta(x,y,z) + \delta(y,z,x) + \delta(z,x,y),
\label{C2}
\\[-1mm]
&
\gamma(x,y,z) + \gamma(z,y,x) - \delta(x,y,z) - \delta(z,y,x),
\label{C3}
\\[-1mm]
&
\alpha(x,y,z) - \alpha(y,x,z) - \gamma(x,y,z),
\label{S1}
\\[-1mm]
&
\alpha(x,y,z) - \alpha(y,z,x) - \delta(x,y,z),
\label{S2}
\\[-1mm]
&
\alpha( v, \alpha(w,x,y), z ) - \alpha( \alpha(v,w,x), y, z ),
\label{A1}
\\[-1mm]
&
\alpha( v, w, \alpha(x,y,z) ) - \alpha( \alpha(v,w,x), y, z ).
\label{A2}
\end{align}
Relations \eqref{C1}-\eqref{C3} are the defining identities for comtrans algebras;
\eqref{S1}-\eqref{S2} express the comtrans operations $\gamma, \delta$ 
in terms of the associative operation $\alpha$;
and \eqref{A1}-\eqref{A2} are the defining identities for associative triple systems.
Relations \eqref{C1}-\eqref{S2} are linear (degree 3) and relations \eqref{A1}-\eqref{A2}
are quadratic (degree 5).
Thus $\mathcal{I}$ is generated by its homogeneous components 
$\mathcal{I}(1)$ and $\mathcal{I}(2)$.
A linear basis for each of these components is obtained by applying all permutations of
the variables and then applying Gaussian elimination to the resulting relation matrices.

We apply the forgetful functor from symmetric operads to shuffle operads as described 
in \S\ref{GBCO} and obtain the ideal $\mathcal{I}_{\text{sh}} \subset \mathcal{P}_{\text{sh}}$.
We must use an operation order such as $\alpha \succ \gamma \succ \delta$ to ensure that 
tree monomials containing $\alpha$ are greater than
tree monomials not containing $\alpha$.
This operation order extends to the revdeglex (reverse degree lexicographical) order on 
the tree monomials forming a linear basis of $\mathcal{P}_{\text{sh}}$.
We apply the algorithm for computing Gr\"obner bases of shuffle ideals to the relations
forming the linear bases of $\mathcal{I}_{\text{sh}}(1)$ and $\mathcal{I}_{\text{sh}}(2)$.
This computation produces relations of the form $f + g$ where $f$ consists of the terms 
containing $\alpha$ (and possibly also $\gamma$, $\delta$) and $g$ consists of the terms 
not containing $\alpha$.
If the Gr\"obner basis algorithm produces a polynomial $f + g$ with $f = 0$ then
$\mathcal{I}_{\text{sh}}$ contains a polynomial $g$ involving only $\gamma$, $\delta$
which is identically 0 in the quotient 
$\mathcal{O}_{\text{sh}} = \mathcal{P}_{\text{sh}} / \mathcal{I}_{\text{sh}}$.
Hence $g$ is a relation between $\gamma$, $\delta$ which belongs to the kernel of 
the expansion map, and so $g$ is a polynomial identity relating $\gamma$, $\delta$.


\section{Identities of degree 3}
\label{degree3section}

\begin{lemma}
Every multilinear identity satisfied by the special commutator and special translator
in every associative triple system follows from \eqref{alternating}--\eqref{comtrans}.
\end{lemma}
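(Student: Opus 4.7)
The plan is to realize this as a statement about the degree-3 expansion map $X_1 \colon \mathcal{T}(1) \to \mathcal{A}(1)$ of Definition \ref{expansionmap} and conclude by comparing dimensions. Here $\mathcal{T}(1)$ has dimension $12$ with ordered basis \eqref{basis12}, while $\mathcal{A}(1)$ has dimension $6$ (the six permutations of $xyz$). The statement to be proved is equivalent to the equality $\ker X_1 = \mathcal{I}(1)$, where $\mathcal{I}(1) \subseteq \mathcal{T}(1)$ is the $S_3$-submodule generated by \eqref{alternating}--\eqref{comtrans}.

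First I would invoke the preceding lemma of \S\ref{prelim}, which gives the inclusion $\mathcal{I}(1) \subseteq \ker X_1$. Next I would read off $\dim \mathcal{I}(1) = 7$ directly from Figure \ref{18x12matrix}: the left-hand matrix there is precisely the $18 \times 12$ coefficient matrix of all six $S_3$-permutations of each of \eqref{alternating}--\eqref{comtrans} with respect to the basis \eqref{basis12}, and its row canonical form on the right has seven nonzero rows.

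The core of the argument is then the dimension count on the image side. Every basis vector of $\mathcal{T}(1)$ maps under $X_1$ to a difference of two permutations of $xyz$, so $\operatorname{im} X_1$ is contained in the kernel of the augmentation $\epsilon \colon \mathcal{A}(1) \to \mathbb{Q}$, a five-dimensional subspace. To show the image exhausts this subspace, I would exhibit five linearly independent images among the twelve generators: the three independent transposition-differences $xyz - yxz$, $xzy - zxy$, $yzx - zyx$ coming from permuted commutators, together with two independent cycle-differences such as $xyz - yzx$ and $xzy - zyx$ coming from permuted translators. A short row reduction confirms these five vectors are linearly independent in $\mathcal{A}(1)$, hence $\dim \operatorname{im} X_1 = 5$ and $\dim \ker X_1 = 12 - 5 = 7$.

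Combining, $\mathcal{I}(1) \subseteq \ker X_1$ and both spaces have dimension $7$, so the inclusion is an equality, which is the lemma. The only step with any content is the image-dimension count on the $\mathcal{A}(1)$ side, and even that is only mildly an obstacle: it reduces to a row reduction of a $5 \times 6$ matrix. Everything else is immediate from the preliminary lemma and from the Gröbner basis calculation already displayed in Figure \ref{18x12matrix}.
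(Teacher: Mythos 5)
Your proof is correct, but it follows a genuinely different route from the paper's. The paper does not argue by comparing $\dim\mathcal{I}(1)$ with $\dim\ker X_1$; instead it computes an explicit integer basis of the nullspace of $[X_1]$ via the Hermite normal form of $[X_1]^t$, reduces that lattice basis with the LLL algorithm, and then identifies rows $1,4,7$ of the reduced basis $N_2$ as a lex-minimal set of $\mathbb{Z}S_3$-module generators of the nullspace, observing that these rows are precisely the coefficient vectors of \eqref{alternating}--\eqref{comtrans}. Your argument is cleaner and essentially hand-checkable: the inclusion $\mathcal{I}(1)\subseteq\ker X_1$ from the preliminary lemma, the count $\dim\mathcal{I}(1)=7$ from the seven nonzero rows of the RCF in Figure \ref{18x12matrix}, the containment $\operatorname{im}X_1\subseteq\ker\epsilon$ since every generator expands to a difference of two permutation monomials, and the five explicitly independent images (which do row-reduce to an echelon set with pivots in five of the six columns) together force $\dim\ker X_1=12-5=7$ and hence equality. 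What your approach gives up is exactly what the paper's machinery is designed to provide in the harder cases: the dimension comparison only works because the candidate generators are known in advance and happen to exhaust the kernel, whereas the HNF/LLL pipeline \emph{discovers} generators of the kernel and produces short representatives, which is what is needed in degrees $5$ and $7$ where the module of identities strictly exceeds the module of consequences. For the degree-$3$ lemma itself, your dimension count is a perfectly valid and arguably more transparent substitute.
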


\begin{proof}
We follow \cite{BPLLL}.
For the expansion map $X_1 \colon \mathcal{T}(1) \to \mathcal{A}(1)$ in degree 3,
the permutations $xyz$, \dots, $zyx$ in lex order form a basis of $\mathcal{A}(1)$, and
the monomials $[x,y,z]$, \dots, $[z,y,x]$, $\langle x,y,z \rangle$, \dots, $\langle z,y,x \rangle$
form an ordered basis of $\mathcal{T}(1)$.
The matrix representing $X_1$ has rank 5 and nullity 7:
\[
[X_1] =
\left[
\begin{array}{rrrrrrrrrrrr}
1 & \cdot & -1 & \cdot & \cdot & \cdot & 1 & \cdot & \cdot & \cdot & -1 & \cdot \\[-1pt]
\cdot & 1 & \cdot & \cdot & -1 & \cdot & \cdot & 1 & -1 & \cdot & \cdot & \cdot \\[-1pt]
-1 & \cdot & 1 & \cdot & \cdot & \cdot & \cdot & \cdot & 1 & \cdot & \cdot & -1 \\[-1pt]
\cdot & \cdot & \cdot & 1 & \cdot & -1 & -1 & \cdot & \cdot & 1 & \cdot & \cdot \\[-1pt]
\cdot & -1 & \cdot & \cdot & 1 & \cdot & \cdot & \cdot & \cdot & -1 & 1 & \cdot \\[-1pt]
\cdot & \cdot & \cdot & -1 & \cdot & 1 & \cdot & -1 & \cdot & \cdot & \cdot & 1
\end{array}
\right]
\]
We compute the Hermite normal form $H$ of the transpose $[X_1]^t$
and an integer matrix $U$ such that $\det(U) = \pm 1$ and $U [X_1]^t = H$.
Rows 6-12 of $H$ are zero and so rows 6-12 of $U$ form a lattice basis $N_1$ 
for the integer nullspace of $[X_1]$:
\[
N_1 =
\left[
\begin{array}{rrrrrrrrrrrr}
\cdot & \cdot & \cdot & 1 & \cdot & 1 & \cdot & \cdot & \cdot & \cdot & \cdot & \cdot \\[-1pt]
-1 & \cdot & -1 & \cdot & \cdot & \cdot & \cdot & \cdot & \cdot & \cdot & \cdot & \cdot \\[-1pt]
\cdot & -1 & \cdot & \cdot & -1 & \cdot & \cdot & \cdot & \cdot & \cdot & \cdot & \cdot \\[-1pt]
\cdot & 1 & -1 & -1 & 1 & \cdot & -1 & 1 & 1 & \cdot & \cdot & \cdot \\[-1pt]
\cdot & \cdot & \cdot & -1 & 1 & \cdot & \cdot & 1 & \cdot & 1 & \cdot & \cdot \\[-1pt]
1 & \cdot & 1 & 1 & -1 & \cdot & 1 & -1 & \cdot & \cdot & 1 & \cdot \\[-1pt]
\cdot & \cdot & 1 & 1 & \cdot & \cdot & 1 & \cdot & \cdot & \cdot & \cdot & 1
\end{array}
\right]
\]
The rows of $N_1$ have squared Euclidean lengths $2, 2, 2, 4, 4, 7, 7$ (sorted) with product 6272.
We apply the LLL algorithm to the lattice $L$ spanned by the rows of
$N_1$, obtain a reduced basis of $L$, sort the vectors by increasing length,
and multiply each row by $\pm 1$ to make its leading entry positive:
\[
N_2 =
\left[
\begin{array}{rrrrrrrrrrrr}
\cdot & \cdot & \cdot & 1 & \cdot & 1 & \cdot & \cdot & \cdot & \cdot & \cdot & \cdot \\[-1pt]
\cdot & 1 & \cdot & \cdot & 1 & \cdot & \cdot & \cdot & \cdot & \cdot & \cdot & \cdot \\[-1pt]
1 & \cdot & 1 & \cdot & \cdot & \cdot & \cdot & \cdot & \cdot & \cdot & \cdot & \cdot \\[-1pt]
\cdot & \cdot & \cdot & \cdot & \cdot & \cdot & \cdot & 1 & 1 & \cdot & \cdot & 1 \\[-1pt]
\cdot & \cdot & 1 & \cdot & 1 & \cdot & \cdot & \cdot & -1 & \cdot & -1 & \cdot \\[-1pt]
\cdot & 1 & \cdot & \cdot & \cdot & -1 & \cdot & -1 & \cdot & -1 & \cdot & \cdot \\[-1pt]
\cdot & \cdot & 1 & \cdot & \cdot & -1 & 1 & -1 & -1 & \cdot & \cdot & \cdot
\end{array}
\right]
\]
The rows of $N_2$ have squared lengths $2, 2, 2, 3, 4, 4, 5$ with product 1920.
The lattice $L$ is a module over $\mathbb{Z} S_3$, where $S_3$ acts by permuting
$x,y,z$ in the ordered basis of $\mathcal{T}(1)$.
Rows $1, 4, 7$ of $N_2$ are the lex-minimal subset 
which generates $L$ as a $\mathbb{Z} S_3$-module.
These rows represent relations \eqref{alternating}--\eqref{comtrans}.
\end{proof}

\begin{remark}
There are 40 pairs of trilinear operations 
$xyz \pm x^p y^p z^p$ and $xyz \pm x^q y^q z^q$
in an associative triple system
where $p, q$ are distinct non-identity permutations of $x,y,z$.
In every case, a single operation generates the same $S_3$-module
as the original pair.
Up to equivalence there are four possibilities:
the translator $xyz - yzx$,
the weakly commutative operation $xyz - xzy + 2 zyx$,
the weakly anticommutative operation $xyz + xzy - 2 zyx$,
and the associative operation $xyz$.
\end{remark}


\section{Identities of degree 5}
\label{degree5section}

We first consider each operation separately and then the two operations together.

\begin{lemma}
\label{lemmadegree5commutator}
Every multilinear identity of degree 5 satisfied by the special commutator
in every associative triple system follows from relation \eqref{alternating} and
\begin{equation}
\label{degree5commutator}
T_{x,z}(v,w,y) + T_{x,z}(w,y,v) + T_{x,z}(y,v,w) = 0,
\end{equation}
where $T_{x,z}(v,w,y) = [[v,w,x],y,z] - [v,w,[x,y,z]]$.
(This simplifies the $q=2$ case of the deformed anti-Jordan triple product in \cite{BPCTO}.)
\end{lemma}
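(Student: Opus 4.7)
The plan is to reprise the LLL-based argument from the proof of the degree-3 lemma, now at weight 2, using the expansion map restricted to the commutator alone. First I would enumerate the weight-2 multilinear part of the free symmetric operad on the ternary operation $\gamma = [\cdot,\cdot,\cdot]$ modulo the alternating relation \eqref{alternating}. There are three complete ternary tree shapes on five leaves according to which argument position of the outer node carries the nested $\gamma$; each admits $5! = 120$ labellings, which are then cut down by applying \eqref{alternating} at the outer and inner nodes. The resulting antisymmetrized monomials form an ordered basis of the domain of the restricted expansion map $X_2^{\gamma} \colon \mathcal{T}_{\gamma}(2) \to \mathcal{A}(2)$.

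Next I would form the matrix of $X_2^\gamma$ by substituting $[a,b,c] \mapsto abc - bac$ in each basis monomial and recording the coefficients against the permutation basis of $\mathcal{A}(2) \cong \mathbb{Q} S_5$. Applying the Hermite normal form procedure to its transpose, exactly as in the degree-3 lemma, yields an integer lattice basis $N_1$ for the nullspace; feeding this into the LLL algorithm produces a reduced basis $N_2$ sorted by Euclidean length. The lattice $L$ spanned by the rows is a $\mathbb{Z} S_5$-module under the permutation action on the five variables $v, w, x, y, z$, and its rows represent all multilinear identities of degree 5 satisfied by the special commutator modulo \eqref{alternating}.

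The third step is to identify a lex-minimal set of $\mathbb{Z} S_5$-generators for $L$ among the short vectors of $N_2$. I would verify directly that the left-hand side of \eqref{degree5commutator} lies in $L$ by expanding each $T_{x,z}$ in the associative envelope and checking that the cyclic sum vanishes identically. Then, tabulating the $S_5$-orbits of the short reduced rows, I would confirm that a single orbit, namely that of \eqref{degree5commutator}, together with the obvious degree-5 lifts of \eqref{alternating} (antisymmetrizations applied at either internal node of each tree shape), spans all of $L$ as a $\mathbb{Z} S_5$-module.

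The main obstacle will be organizing the LLL output so that the $S_5$-orbit structure is transparent: the short reduced vectors will typically mix different tree shapes and interleave genuinely new identities with trivial lifts of \eqref{alternating}, and the task is to isolate the compact cyclic-sum generator claimed in the lemma. This is precisely where the simplification over the $q = 2$ anti-Jordan triple product identity from \cite{BPCTO} occurs, since that identity generates the same $\mathbb{Z} S_5$-submodule but with a bulkier representative; confirming that the short LLL vector matches \eqref{degree5commutator} up to an $S_5$-action is the decisive calculation.
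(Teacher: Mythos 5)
Your plan is essentially the paper's proof: the authors likewise build the matrix of the weight-2 expansion map for the single operation $[-,-,-]$, extract an integer basis of its nullspace via the Hermite normal form, LLL-reduce it, and locate the first short vector whose $S_5$-orbit, added to the module of consequences of \eqref{alternating}, fills out the full kernel; that vector is \eqref{degree5commutator}. The only cosmetic difference is that you propose to quotient by \eqref{alternating} first (working in a 90-dimensional space), whereas the paper keeps all $3 \times 120 = 360$ monomials and instead compares the rank 270 of the consequence module with the rank 290 of the nullspace — both bookkeeping choices lead to the same 20-dimensional module of new identities.
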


\begin{proof}
It is easy to check by hand that \eqref{degree5commutator} is satisfied by the special commutator.
We must verify that every identity in degree 5 satisfied by the special commutator follows from
\eqref{alternating} and \eqref{degree5commutator}.
We use computer algebra \cite{BPLLL}.

Let $\mathcal{T}$ be the free weight-graded operad generated by one ternary operation 
$[-,-,-]$ with no symmetry.
The homogeneous space $\mathcal{T}(2)$ is isomorphic to 
the direct sum of three copies of the regular representation $\mathbb{F} S_5$ 
corresponding to the three association types
$[[-,-,-],-,-]$, $[-,[-,-,-],-]$, $[-,-,[-,-,-]]$
in that order.
Each association type has 120 permutations of $v, w, x, y, z$ in lex order.

Each of these nonassociative monomials expands using the special commutator to 
a linear combination of monomials in the operad $\mathcal{A}$ of Definition \ref{defoperads}: 
for example,
\[
[[v,w,x],y,z] \; \mapsto \; vwxyz - wvxyz - yvwxz + ywvxz.
\]
We construct the $120 \times 360$ matrix $M$ whose $(i,j)$ entry is the coefficient of
the $i$-th associative monomial in the expansion of the $j$-th nonassociative monomial.

We compute the Hermite normal form $H$ of the transpose $M^t$ and an integer matrix $U$ 
with $\det(U) = \pm 1$ and $U M^t = H$.
We find that $H$ has rank 70 so the bottom 290 rows of $H$ are 0.
Hence the bottom 290 rows of $U$, denoted $N$, form an integer basis for the nullspace of $M$;
that is, the kernel of the expansion map $X_5$.
The largest squared Euclidean length of the rows of $N$ is 49352.

We apply the LLL algorithm to the rows of $N$ 
and obtain a matrix $N'$ whose rows generate the same lattice. 
The largest squared Euclidean length of the rows of $N'$ is only 6.
Let $N''$ consist of the rows of $N'$ sorted by increasing length.

Let $I(x,y,z) = [x,y,z] + [y,x,z]$ denote relation \eqref{alternating}.
We generate all consequences of $I$ in degree 5 by partial compositions in the operad;
that is, substituting $[-,-,-]$ into $I$, or substituting $I$ into $[-,-,-]$:
\begin{equation}
\label{consequences5}
\begin{array}{l@{\qquad}l@{\qquad}l}
I([x,v,w],y,z), &
I(x,[y,v,w],z), &
I(x,y,[z,v,w]), \\[2pt] {}
[I(x,y,z),v,w], &
[v,I(x,y,z),w], &
[v,w,I(x,y,z)].
\end{array}
\end{equation}
These relations generate the $S_5$-module of identities which follow from \eqref{alternating}.

We construct the $720 \times 360$ matrix $C$ whose rows contain all permutations of 
the relations \eqref{consequences5}.
We compute the RCF and find that $C$ has rank 270.
Since $N$ has rank 290, there is a 20-dimensional quotient $S_5$-module of new identities.

For each row $i$ of $N''$, we stack $C$ on top of the matrix containing all permutations
of the relation represented by row $i$.
Row 194 is the first which increases the rank from 270 to 290.
This row is the coefficient vector of relation \eqref{degree5commutator}.
\end{proof}

\begin{remark}
The special commutator is equivalent to $xyz + xzy - yxz + yzx - zxy - zyx$,
case $q = 2$ of the deformed anti-Jordan triple product \cite{BPCTO}.
This follows from the equality of the RCFs of the representation matrices for the operations:
\[
\left[ \;
0,
\left[ \begin{array}{cc} 1 & 2 \\ 0 & 0 \end{array} \right]\!\!, \,
1
\; \right]
\]
Relation \eqref{degree5commutator} is much simpler than 
the relation with 24 terms in \cite[equation (55)]{BPCTO}. 
\end{remark}

\begin{lemma}
\label{lemmadegree5translator}
Every multilinear identity in degree 5 satisfied by the special translator
in every associative triple system follows from relation \eqref{Jacobi} and
\begin{equation}
\label{degree5translator}
R_{y,z}( \langle v, w, x \rangle )
=
\langle R_{y,z}(v), w, x \rangle +
\langle v, R_{y,z}(w), x \rangle +
\langle v, w, R_{y,z}(x) \rangle,
\end{equation}
where $R_{y,z}(u) = \langle u, y, z \rangle$.
(Compare the result for the cyclic commutator in \cite{BPCTO}.)
\end{lemma}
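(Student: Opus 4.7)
The plan is to follow the template of the proof of Lemma \ref{lemmadegree5commutator} essentially verbatim, with the single operation now being the translator. First I would check by direct substitution that $\langle x,y,z \rangle = xyz - yzx$ satisfies \eqref{degree5translator}; writing out $R_{y,z}(u) = uyz - yzu$ and expanding both sides, the sixteen associative monomials cancel pairwise using associativity, so this step is mechanical.

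For the converse, let $\mathcal{T}'$ be the free weight-graded operad generated by one ternary operation $\langle -,-,- \rangle$ with no symmetry. The homogeneous component $\mathcal{T}'(2)$ is the direct sum of three copies of the regular representation $\mathbb{F}S_5$ indexed by the three association types
\[
\langle \langle -,-,- \rangle, -, - \rangle, \quad
\langle -, \langle -,-,- \rangle, - \rangle, \quad
\langle -, -, \langle -,-,- \rangle \rangle,
\]
giving a $360$-dimensional space spanned by $360$ lex-ordered monomials in $v,w,x,y,z$. I would build the $120 \times 360$ expansion matrix $M$ whose $(i,j)$ entry is the coefficient of the $i$-th associative monomial in the image of the $j$-th nonassociative monomial under $\langle x,y,z \rangle \mapsto xyz - yzx$. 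Computing the Hermite normal form of $M^t$ together with a unimodular transforming matrix $U$ produces an integer lattice basis $N$ for the kernel of the expansion map $X_2$. Applying LLL to $N$ yields a reduced basis $N'$, which I then sort by increasing Euclidean length to obtain $N''$.

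Next I would generate the consequences of the Jacobi identity $J(x,y,z) = \langle x,y,z \rangle + \langle y,z,x \rangle + \langle z,x,y \rangle$ in degree 5 by the six partial compositions
\[
J(\langle x,v,w \rangle,y,z), \ \ J(x,\langle y,v,w \rangle,z), \ \ J(x,y,\langle z,v,w \rangle),
\]
\[
\langle J(x,y,z),v,w \rangle, \ \ \langle v,J(x,y,z),w \rangle, \ \ \langle v,w,J(x,y,z) \rangle,
\]
assemble all $720$ permutations of these relations into a matrix $C$, and compute its rank $r$ by RCF. The difference $\mathrm{rank}(N) - r$ is the dimension of the $S_5$-module of new identities. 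Scanning the rows of $N''$ in order and, for each row, augmenting $C$ with the $S_5$-orbit of that row, I identify the earliest row whose inclusion raises the rank to $\mathrm{rank}(N)$. I expect this row to be (a signed permutation of) the coefficient vector of \eqref{degree5translator}, thereby showing that \eqref{Jacobi} together with \eqref{degree5translator} generate the full kernel in degree 5.

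The main obstacle is not conceptual: every step is a direct transcription of the commutator computation. The delicate part is the module-theoretic bookkeeping at the end; it is not enough to find \emph{a} short vector outside the span of the Jacobi consequences, one must verify that its $S_5$-orbit combined with $C$ attains the full rank of $N$, confirming that no further generator is needed and that the $20$-or-so-dimensional (the exact figure will emerge from the rank computation) quotient module is exhausted by a single $S_5$-orbit. As a sanity check, I would compare the $S_5$-character of this quotient with that of the cyclic $S_5$-module generated by \eqref{degree5translator}, echoing the comparison with the cyclic commutator case in \cite{BPCTO}.
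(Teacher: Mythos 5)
Your proposal is correct and follows exactly the route the paper intends: the paper's own proof of this lemma is literally the single sentence ``Similar to the proof of Lemma \ref{lemmadegree5commutator},'' and what you have written is precisely that analogous computation (expansion matrix, Hermite normal form, LLL-reduced nullspace lattice, rank comparison against the degree-5 consequences of the Jacobi identity) carried out for the translator. The direct verification of \eqref{degree5translator} and the final bookkeeping about the $S_5$-orbit attaining the full nullity are exactly the right checks.
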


\begin{proof}
Similar to the proof of Lemma \ref{lemmadegree5commutator}.
\end{proof}

\begin{theorem}
\label{degree5comtrans}
Every multilinear identity in degree 5 relating the special commutator and the special translator
in every associative triple system follows from 
\eqref{alternating}--\eqref{comtrans}, \eqref{degree5commutator}, \eqref{degree5translator},
and the following new identities involving both operations:
\begin{align}
&
  [ [ vwx ] yz ]
 + [ [ xvy ] wz ]
 - [ \langle vwx \rangle yz ]
 + [ \langle vyw \rangle xz ]
 - [ \langle xyw \rangle vz ]
= 0,
\label{degree5mixed1}
\\[-1pt]
&
  [ [vwx] yz ]
 - [ [vyw] xz ]
 + \langle [wvz] xy \rangle
 + [ \langle vyw \rangle xz ]
 + [ vw \langle zxy \rangle ]
= 0,
\label{degree5mixed2}
\\[-1pt]
&
  [[vwx]yz]
 + [[vwz]xy]
 - [[xwy]vz]
 - [[zwy]xv]
 + \langle [wvz]xy \rangle
\label{degree5mixed3}
\\[-1pt]
&
 + \langle [xwz]yv \rangle
 + \langle [zwy]xv \rangle
 + [\langle wvx \rangle yz]
 + [\langle wvz \rangle xy]
 - [\langle wyv \rangle xz]
\notag
\\[-1pt]
&
 - \langle \langle wvz \rangle xy \rangle
 + \langle \langle wyv \rangle xz \rangle
 + \langle wx\langle zyv \rangle \rangle
 = 0.
\notag
\end{align}
\end{theorem}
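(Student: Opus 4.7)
The plan is to follow the computational strategy established in the proofs of Lemmas \ref{lemmadegree5commutator} and \ref{lemmadegree5translator}, but now working with the full two-operation free operad rather than a single-operation one. Let $\mathcal{T}$ be the free weight-graded operad generated by the two ternary operations $[-,-,-]$ and $\langle -,-,- \rangle$ with no symmetry. The homogeneous component $\mathcal{T}(2)$ decomposes into three association types (by position of the internal node) with four choices of operation labels at the two internal vertices, giving $3 \cdot 4 \cdot 120 = 1440$ multilinear monomials in $v, w, x, y, z$. The expansion map $X_2 \colon \mathcal{T}(2) \to \mathcal{A}(2)$ sends each such monomial to a linear combination of the $120$ permutations of $vwxyz$, producing a $120 \times 1440$ matrix $M$.

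First I would compute the Hermite normal form of $M^t$ together with a unimodular transition matrix $U$ so that the bottom rows of $U$ form an integer basis $N$ for $\ker X_2$. I would then apply LLL to $N$ to obtain a reduced basis $N'$, sort its rows by increasing Euclidean length to get $N''$, and normalize the signs of the leading coefficients. Next I would construct the consequence matrix $C$ whose rows contain all $S_5$-permutations of every degree-$5$ lift of the known relations: the six partial-composition consequences of each degree-$3$ identity \eqref{alternating}--\eqref{comtrans} obtained by substituting $[-,-,-]$ or $\langle -,-,- \rangle$ into each of the three argument slots (or vice versa), together with the $S_5$-orbits of the degree-$5$ identities \eqref{degree5commutator} and \eqref{degree5translator}. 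The rank of $N$ minus the rank of $C$ gives the dimension of the $S_5$-module of genuinely new identities relating the two operations.

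To identify generators for this quotient module, I would scan the rows of $N''$ in order and test whether adjoining the $S_5$-orbit of that row to $C$ strictly increases the rank. The first row producing an increase yields the first generator; repeating the process on the augmented consequence matrix yields subsequent generators until the rank of $C$ equals the rank of $N$. The expected outcome is that exactly three such generators appear, and that after expressing each in the original basis they coincide (up to $\mathbb{Z} S_5$-action and normalization) with \eqref{degree5mixed1}, \eqref{degree5mixed2}, \eqref{degree5mixed3}. A short direct check, expanding both sides via $[x,y,z] \mapsto xyz - yxz$ and $\langle x,y,z \rangle \mapsto xyz - yzx$ and collecting coefficients on permutations of $vwxyz$, confirms that each of \eqref{degree5mixed1}--\eqref{degree5mixed3} does lie in $\ker X_2$.

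The main obstacle will be the sheer size of the computation: the $120 \times 1440$ expansion matrix, the corresponding integer nullspace of dimension on the order of $1440 - \mathrm{rank}(M)$, and the LLL reduction on this nullspace are substantially larger than in Lemmas \ref{lemmadegree5commutator} and \ref{lemmadegree5translator}. A secondary difficulty is that the new identities genuinely involve both operations in each term, so the scan through $N''$ must be guided to avoid picking up rows which already lie in the $\mathbb{Z} S_5$-span of \eqref{degree5commutator} or \eqref{degree5translator}; this is handled automatically by the rank test, but one must be careful to feed the $S_5$-orbits of candidate rows (not just the rows themselves) into the augmented matrix, so that the module generation condition, rather than mere linear independence, is what is being verified.
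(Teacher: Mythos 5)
Your proposal is correct and follows essentially the same route as the paper: the paper's proof builds the same $120 \times 1440$ expansion matrix over the 12 association types for two operations, excludes the 36 partial-composition consequences of \eqref{alternating}--\eqref{comtrans} together with all permutations of \eqref{degree5commutator} and \eqref{degree5translator}, and then proceeds exactly as in the proof of Lemma \ref{lemmadegree5commutator} (Hermite normal form, LLL reduction, rank comparison, and a scan for rows whose $S_5$-orbits raise the rank). The only quibble is that each degree-3 identity yields twelve (not six) partial-composition consequences once both operation choices are counted, for the total of 36 that the paper records.
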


\begin{proof}
Let $\mathcal{T}$ be the free symmetric operad generated by 
two ternary operations $[-,-,-]$ and $\langle -,-,- \rangle$ with no symmetry.
The homogeneous space $\mathcal{T}(2)$ is isomorphic to the direct sum of 
12 copies of the regular representation $\mathbb{F} S_5$ corresponding to the association types
ordered as follows:
\[
\begin{array}{l@{\qquad}l@{\qquad}l@{\qquad}l}
[[-,-,-],-,-], &
\langle [-,-,-],-,- \rangle, &
[ \langle -,-,- \rangle ,-,-], &
\langle \langle -,-,- \rangle,-,- \rangle,
\\ {}
[-,[-,-,-],-], &
\langle -,[-,-,-],- \rangle, &
[-,\langle -,-,- \rangle,-], &
\langle -,\langle -,-,- \rangle,- \rangle,
\\ {}
[-,-,[-,-,-]], &
\langle -,-,[-,-,- \rangle, &
[-,-,\langle -,-,- \rangle ], &
\langle -,-,\langle -,-,- \rangle \rangle.
\end{array}
\]
Each association type has 120 permutations of $v, w, x, y, z$ in lex order.
We construct the $120 \times 1440$ matrix $M$ in which the $(i,j)$ entry is the coefficient of
the $i$-th associative monomial in the expansion of the $j$-th nonassociative monomial.
There are 36 consequences of identities \eqref{alternating}--\eqref{comtrans} since we can substitute
either of two operations.
We must exclude these consequences together with all permutations of relations
\eqref{degree5commutator} and \eqref{degree5translator}.
The rest is similar to the proof of Lemma \ref{lemmadegree5commutator}.
\end{proof}

\begin{definition}
A \emph{Smith algebra} is a comtrans algebra satisfying the five identities of
Lemmas \ref{lemmadegree5commutator}, \ref{lemmadegree5translator} and
Theorem \ref{degree5comtrans}.
(Since these identities have weights 1 and 2, the corresponding operad is quadratic
and hence has a Koszul dual \cite{GK}.)
\end{definition}

\begin{remark}
The weakly anticommutative operation 
satisfies a 141-dimensional $S_5$-module of multilinear identities
which are not consequences of the symmetric sum identity (Remark \ref{wacremark}).
If $T_{yz}(u) = \{u,y,z\} + \{u,z,y\}$ then
the simplest new identity that we found (which however does not generate all new identities) is
\[
T_{yz}( \{v,w,x\} ) =
\{ v, T_{yz}(w), x \} + \{ v, T_{yz}(w), x \} + \{ v, w, T_{yz}(x) \}.
\]
\end{remark}


\section{Identities of degree 7}
\label{degree7section}

Every special comtrans algebra is a Smith algebra, but the converse is false:
we demonstrate the existence of identities in degree 7 satisfied by every special
comtrans algebra but not by every Smith algebra.
There are new identities for each operation separately, but none relating the operations.

For degree 7, the methods of the previous sections are impractical: 
for one (resp. two) operation(s), there are 12 (resp.~96) association types
and 60480 (resp. 483840) multilinear monomials.
We use a constructive version of the representation theory of
the symmetric group to decompose the kernel of the expansion map $X_7$
into isotypic components corresponding to the partitions of 7.
We provide only an outline; 
these methods have been discussed in detail in previous papers \cite{BBM, BMP, BPHITC}.
We order the 12 association types for one ternary operation as follows:
\begin{equation}
\label{degree7types}
\begin{array}{l@{\quad}l@{\quad}l}
[[[-, -, -], -, -], -, -], &
[[-, [-, -, -], -], -, -], &
[[-, -, [-, -, -]], -, -],
\\[1pt] {}
[[-, -, -], [-, -, -], -], &
[[-, -, -], -, [-, -, -]], &
[-, [[-, -, -], -, -], -],
\\[1pt] {}
[-, [-, [-, -, -], -], -], &
[-, [-, -, [-, -, -]], -], &
[-, [-, -, -], [-, -, -]],
\\[1pt] {}
[-, -, [[-, -, -], -, -]], &
[-, -, [-, [-, -, -], -]], &
[-, -, [-, -, [-, -, -]]].
\end{array}
\end{equation}
Recall that an identity is \emph{new} if it does not follow from those of lower degree.

\begin{lemma}
\label{commutatordegree7}
The $S_7$-module of new identities for the special commutator is nonzero,
and has the following decomposition into isotypic components:
\[
[43] \oplus [421]^2 \oplus [41^3] \oplus [3^21]^3 \oplus [32^2]^2 \oplus
[321^2]^5 \oplus [31^4]^2 \oplus [2^31]^2 \oplus [2^21^3]^3 \oplus [21^5].
\]
We write $[\lambda]^m$ if the irreducible representation for partition $\lambda$
has multiplicity $m$.
\end{lemma}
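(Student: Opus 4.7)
The plan is to apply the constructive representation theory of $S_7$ developed in \cite{BBM, BMP, BPHITC}, decomposing the module of identities isotypic-component by isotypic-component, since the ambient expansion matrix --- roughly $60480 \times 5040$ over the $12$ association types \eqref{degree7types} --- is too large to handle directly.

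First I fix, for each partition $\lambda \vdash 7$, a matrix realization $\rho_\lambda \colon S_7 \to \mathrm{GL}_{d_\lambda}(\mathbb{F})$ of the irreducible $[\lambda]$ of dimension $d_\lambda$. The free module $\mathcal{T}(3)$ is isomorphic to $12$ copies of the regular representation, so the $\lambda$-isotypic component has multiplicity $12 d_\lambda$, while $\mathcal{A}(3) \cong \mathbb{F} S_7$ contributes multiplicity $d_\lambda$. For each of the $12$ association types and each of the $d_\lambda$ basis vectors of the $\lambda$-representation, I form the corresponding element of $\mathcal{T}(3)$, expand it under the special commutator $[x,y,z] = xyz - yxz$ into a linear combination of permutations in $\mathcal{A}(3)$, and project that combination onto the $\lambda$-component by summing $\rho_\lambda(\sigma)$ weighted by the coefficients. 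This yields a matrix $(X_3)_\lambda$ of size $12 d_\lambda \times d_\lambda$ (in $12$ blocks of size $d_\lambda \times d_\lambda$) whose nullity, divided by $d_\lambda$, equals the multiplicity $n_\lambda$ of $[\lambda]$ in $\ker X_3$.

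Second, I compute the multiplicity $c_\lambda$ of $[\lambda]$ in the submodule $\mathcal{O} \subseteq \ker X_3$ of \emph{old} identities --- those following from \eqref{alternating} and \eqref{degree5commutator}. To reach weight $3$ from weight $1$, I generate all lifts of \eqref{alternating} by two partial compositions with $[-,-,-]$; to reach weight $3$ from weight $2$, I generate all lifts of \eqref{degree5commutator} by a single partial composition. Applying $\rho_\lambda$ to each such lift and stacking the resulting image vectors, I obtain a matrix whose rank, divided by $d_\lambda$, is $c_\lambda$. The multiplicity of $[\lambda]$ in the new-identity module is then $m_\lambda = n_\lambda - c_\lambda$, and tabulating these across the $15$ partitions of $7$ gives the claimed decomposition; the total dimension $\sum_\lambda m_\lambda d_\lambda$ serves as a global sanity check, and nonvanishing of any single $m_\lambda$ proves that the module is nonzero.

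The main obstacle will be the scale of the linear algebra for the partitions with large $d_\lambda$, particularly $\lambda = [421]$ and $\lambda = [321^2]$ with $d_\lambda = 35$, where the consequence-matrices for $\mathcal{O}$ become quite tall once all substitutions and $S_7$-orbits are enumerated. As in the proof of Lemma \ref{lemmadegree5commutator}, I would carry out the computations modulo a large prime to keep arithmetic exact, and cross-check the final multiplicities by recomputing in a different monomial order or by verifying $S_7$-closure of the generators used for $\mathcal{O}$.
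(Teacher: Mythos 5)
Your proposal follows essentially the same route as the paper: for each partition $\lambda \vdash 7$ you compare the multiplicity of $[\lambda]$ in the kernel of the expansion map (encoded by a $12d_\lambda \times d_\lambda$ block matrix built from the irreducible representation) with its multiplicity in the module of consequences of \eqref{alternating} and \eqref{degree5commutator}, and take the difference; your two-step lifting of \eqref{alternating} generates the same $S_7$-submodule as the paper's procedure of first forming the six degree-5 consequences and then applying one more partial composition. One bookkeeping correction: with the matrices as you set them up, the multiplicity of $[\lambda]$ in the kernel is the corank $12d_\lambda - \mathrm{rank}$ itself, and $c_\lambda$ is the rank of the stacked consequence matrix itself --- neither quantity should be divided by $d_\lambda$ (as written your formulas would generally not even return integers, so you would catch this immediately, but it should be fixed).
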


\begin{proof}
There are six consequences \eqref{consequences5} in degree 5 of identity \eqref{alternating}.
Combining these with \eqref{degree5commutator}, we obtain seven identities $J(v,w,x,y,z)$
each of which produces eight consequences $K(t,\dots,z)$ in degree 7;
these can be expressed using partial compositions as $J \circ_k \gamma$ ($1 \le k \le 5$) 
and $\gamma \circ_k J$ ($1 \le k \le 3$) where $\gamma$ denotes the commutator.
For each partition $\lambda$ of 7 we write
$R_\lambda \colon \mathbb{Q} S_7 \to M_{d_\lambda}(\mathbb{Q})$
for the corresponding irreducible representation;
$d_\lambda$ is the number of standard tableaux of shape $\lambda$.
For each $p \in S_7$ we use Clifton's algorithm \cite{BMP,C} to compute $R_\lambda(p)$.
For each $\lambda$ we construct a $56 d_\lambda \times 12 d_\lambda$ matrix $C_\lambda$
partitioned into $d_\lambda \times d_\lambda$ blocks.
Each consequence $K_i$ ($1 \le i \le 56$) is a sum of 12 components $K_i^1, \dots K_i^{12}$
corresponding to the association types \eqref{degree7types}.
Block $(i,j)$ of $C_\lambda$ contains $R_\lambda( K_i^j )$.
We compute the RCF of $C_\lambda$; its rank $c_\lambda$ is 
the multiplicity of representation $[\lambda]$ in the $S_7$-module of all consequences 
of the identities of lower degree (Figure \ref{c7m}).

We substitute the identity permutation of $t, u, \dots, z$ into each association type,
and obtain monomials $\xi_1, \dots, \xi_{12}$ which we expand using the special commutator
into a linear combination of eight associative monomials.
For each $\lambda$ we construct a $d_\lambda \times 12 d_\lambda$ matrix $E_\lambda$: 
one row of $d_\lambda \times d_\lambda$ blocks in which block $j$ is $R_\lambda( \xi_j )^t$.
We compute the RCF of $E_\lambda$ and find its rank $e_\lambda$;
then $a_\lambda = 12 d_\lambda - e_\lambda$
is the multiplicity of $[\lambda]$ in the kernel of the expansion map (Figure \ref{c7m}).
We compute the $a_\lambda \times 12 d_\lambda$ matrix $N_\lambda$ in RCF whose row space is
the nullspace of $E_\lambda$, and check that the row space of $N_\lambda$ contains
the row space of $C_\lambda$.
The difference $n_\lambda = a_\lambda - c_\lambda$ is the multiplicity of $[\lambda]$
in the quotient module $N_\lambda / C_\lambda$ (we identify each matrix with its row space)
of new identities in degree 7 for partition $\lambda$ (Figure \ref{c7m}).
\end{proof}

\begin{figure}[ht]
$
\begin{array}
{r|r@{\;\;\,}r@{\;\;\,}r@{\;\;\,}r@{\;\;\,}r@{\;\;\,}r@{\;\;\,}r@{\;\;\,}r@{\;\;\,}r@{\;\;\,}r@{\;\;\,}r@{\;\;\,}r@{\;\;\,}r@{\;\;\,}r@{\;\;\,}r}
\lambda &
7 & 61 & 52 & 51^2 & 43 & 421 & 41^3 & 3^21 & 32^2 & 321^2 & 31^4 & 2^31 & 2^21^3 & 21^5 & 1^7
\\
\midrule
d_\lambda &
1 & 6 & 14 & 15 & 14 & 35 & 20 & 21 & 21 & 35 & 15 & 14 & 14 & 6 & 1
\\
c_\lambda &
12 & 71 & 162 & 173 & 157 & 394 & 225 & 231 & 233 & 385 & 166 & 153 & 152 & 66 & 11
\\
a_\lambda &
12 & 71 & 162 & 173 & 158 & 396 & 226 & 234 & 235 & 390 & 168 & 155 & 155 & 67 & 11
\\
n_\lambda &
0 & 0 & 0 & 0 & 1 & 2 & 1 & 3 & 2 & 5 & 2 & 2 & 3 & 1 & 0
\\
\midrule
\end{array}
$
\vspace{-4mm}
\caption{Multiplicities in degree 7 for the ternary commutator}
\label{c7m}
\end{figure}

\begin{lemma}
\label{translatordegree7}
The $S_7$-module of new identities for the special translator is nonzero
and has the following decomposition into isotypic components:
\[
[52] \oplus [51^2] \oplus [43] \oplus [421]^3 \oplus [41^3]^2 \oplus [3^21]^2 \oplus
[32^2]^2 \oplus [321^2]^3 \oplus [31^4] \oplus [2^31] \oplus [2^21^3].
\]
\end{lemma}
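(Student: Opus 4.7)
The plan is to mirror the argument of Lemma~\ref{commutatordegree7} with the translator $\delta$ replacing the commutator $\gamma$, so I focus on the points of divergence. The input identities of lower degree are now the Jacobi identity \eqref{Jacobi} in degree~3 and identity \eqref{degree5translator} in degree~5. From \eqref{Jacobi} I generate six degree-5 consequences by the partial compositions $\delta \circ_k \langle -,-,- \rangle$ for $k = 1, 2, 3$ and $\langle -,-,- \rangle \circ_k \delta$ for $k = 1, 2, 3$; together with \eqref{degree5translator} itself this gives seven degree-5 identities $J_1, \dots, J_7$ satisfied by the special translator. Each $J_s$ in turn yields eight degree-7 consequences $J_s \circ_k \delta$ ($1 \le k \le 5$) and $\delta \circ_k J_s$ ($1 \le k \le 3$), for a total of $N = 56$ consequences $K_1, \dots, K_{56}$ in the free weight-graded operad generated by $\delta$. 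I decompose each $K_i = \sum_{j=1}^{12} K_i^j$ according to the twelve association types of \eqref{degree7types}.

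For each partition $\lambda \vdash 7$, I would compute the irreducible matrix representation $R_\lambda \colon \mathbb{Q} S_7 \to M_{d_\lambda}(\mathbb{Q})$ using Clifton's algorithm \cite{BMP,C}. Form the $56 d_\lambda \times 12 d_\lambda$ block matrix $C_\lambda$ whose $(i,j)$ block is $R_\lambda(K_i^j)$ and read off its rank $c_\lambda$, which is the multiplicity of $[\lambda]$ inside the $S_7$-submodule of consequences of lower-degree identities. In parallel, for each association type $j$ let $\xi_j$ denote the monomial whose leaves carry the identity permutation of $t, u, v, w, x, y, z$, and expand $\xi_j$ into a signed sum of at most eight associative monomials by iterating $\langle x,y,z \rangle \mapsto xyz - yzx$. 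Assemble the $d_\lambda \times 12 d_\lambda$ matrix $E_\lambda$ whose $j$-th block is $R_\lambda(\xi_j)^t$; then $a_\lambda = 12 d_\lambda - \operatorname{rank}(E_\lambda)$ is the multiplicity of $[\lambda]$ in the full kernel of $X_3$ restricted to $\mathcal{T}(3)$. After computing an RCF basis $N_\lambda$ of $\ker E_\lambda$ and verifying that the row space of $C_\lambda$ is contained in the row space of $N_\lambda$ as a consistency check (every consequence is a translator identity), the multiplicity of $[\lambda]$ in the $S_7$-module of new identities is $n_\lambda = a_\lambda - c_\lambda$. The lemma is precisely the assertion that $(n_\lambda)_{\lambda \vdash 7}$ has value $1$ at $\{52, 51^2, 43, 31^4, 2^31, 2^21^3\}$, value $2$ at $\{41^3, 3^21, 32^2\}$, value $3$ at $\{421, 321^2\}$, and value $0$ elsewhere.

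The hard part is computational rather than conceptual. For the two largest partitions $\lambda \in \{421, 321^2\}$, both with $d_\lambda = 35$, the matrix $C_\lambda$ has $56 \cdot 35 = 1960$ rows and $420$ columns; direct RCF over $\mathbb{Q}$ risks coefficient blow-up, so I would follow the strategy used for Lemma~\ref{commutatordegree7} and perform all rank determinations modulo a large prime, cross-checking with a second prime to rule out accidental rank drops, and lifting to $\mathbb{Q}$ only when producing an explicit generator of a new identity. One minor point specific to the translator is that the expansion map sends each $\delta$-node to a sum of two terms obtained by cyclically shifting three arguments rather than by swapping two, so the matrices $E_\lambda$ have a structurally different sparsity pattern from the commutator case; the algorithmic scaffolding, however, is unchanged, and the whole table of $(c_\lambda, a_\lambda, n_\lambda)$ analogous to Figure~\ref{c7m} can be read out in a single pass once the representation matrices $R_\lambda$ are cached.
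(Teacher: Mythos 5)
Your proposal is correct and follows essentially the same route as the paper, whose own proof of this lemma is simply a reference to the method of Lemma~\ref{commutatordegree7}: six degree-5 consequences of \eqref{Jacobi} plus \eqref{degree5translator} give seven identities, hence $56$ degree-7 consequences, and for each $\lambda \vdash 7$ one compares the rank $c_\lambda$ of the consequence matrix $C_\lambda$ with the nullity $a_\lambda = 12 d_\lambda - \operatorname{rank}(E_\lambda)$ of the expansion matrix, yielding the multiplicities $n_\lambda = a_\lambda - c_\lambda$ recorded in Figure~\ref{t7m}, which agree with your stated values.
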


\begin{proof}
Similar to the proof of Lemma \ref{commutatordegree7}; see Figure \ref{t7m}.
\end{proof}

\begin{figure}[ht]
$
\begin{array}
{r|r@{\;\;\,}r@{\;\;\,}r@{\;\;\,}r@{\;\;\,}r@{\;\;\,}r@{\;\;\,}r@{\;\;\,}r@{\;\;\,}r@{\;\;\,}r@{\;\;\,}r@{\;\;\,}r@{\;\;\,}r@{\;\;\,}r@{\;\;\,}r}
\lambda &
7 & 61 & 52 & 51^2 & 43 & 421 & 41^3 & 3^21 & 32^2 & 321^2 & 31^4 & 2^31 & 2^21^3 & 21^5 & 1^7
\\
\midrule
d_\lambda &
1 & 6 & 14 & 15 & 14 & 35 & 20 & 21 & 21 & 35 & 15 & 14 & 14 & 6 & 1
\\
c_\lambda &
12 & 68 & 156 & 168 & 155 & 388 & 222 & 232 & 232 & 388 & 168 & 155 & 156 & 68 & 12
\\
a_\lambda &
12 & 68 & 157 & 169 & 156 & 391 & 224 & 234 & 234 & 391 & 169 & 156 & 157 & 68 & 12
\\
n_\lambda &
0 & 0 & 1 & 1 & 1 & 3 & 2 & 2 & 2 & 3 & 1 & 1 & 1 & 0 & 0
\\
\midrule
\end{array}
$
\vspace{-4mm}
\caption{Multiplicities in degree 7 for the ternary translator}
\label{t7m}
\end{figure}

For two ternary operations we have $2^3 \cdot 12 = 96$ association types:
in each type for one operation \eqref{degree7types}, 
each operation may be replaced by either of two operations.

\begin{theorem}
Every identity in degree 7 satisfied by the special commutator and translator 
in every associative triple system follows from
\eqref{alternating}--\eqref{comtrans},
\eqref{degree5commutator}, \eqref{degree5translator},
and \eqref{degree5mixed1}--\eqref{degree5mixed3},
together with the new identities for the commutator and translator separately
whose existence is demonstrated by Lemmas \ref{commutatordegree7} and \ref{translatordegree7}.
That is, special comtrans algebras satisfy no new identities
in which every term has both operations.
\end{theorem}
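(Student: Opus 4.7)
The plan is to extend the representation-theoretic machinery of Lemmas \ref{commutatordegree7} and \ref{translatordegree7} to the two-operation setting, where the $96$ association types replace the $12$ single-operation types of \eqref{degree7types}. For each partition $\lambda \vdash 7$, I will build a consequence matrix $C_\lambda$ encoding all degree-$7$ consequences of the previously established identities in the $\lambda$-isotypic component, and an expansion matrix $E_\lambda$ encoding the special commutator/translator expansion on one representative monomial of each association type. The theorem reduces to the equality $c_\lambda = a_\lambda$ for every $\lambda$, where $c_\lambda = \mathrm{rank}(C_\lambda)$ and $a_\lambda = 96 d_\lambda - \mathrm{rank}(E_\lambda)$, since this says the row space of $C_\lambda$ already exhausts the $\lambda$-isotypic component of $\ker X_7$.

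The generators for $C_\lambda$ are the degree-$7$ consequences of \eqref{alternating}--\eqref{comtrans}, \eqref{degree5commutator}, \eqref{degree5translator}, \eqref{degree5mixed1}--\eqref{degree5mixed3}, together with the single-operation new identities whose existence is demonstrated in Lemmas \ref{commutatordegree7} and \ref{translatordegree7}. For a degree-$5$ generator $J$, the degree-$7$ consequences are the partial compositions $J \circ_k \omega$ for $\omega \in \{\gamma, \delta\}$ and $k \in \{1,\dots,5\}$, together with $\omega \circ_k J$ for $\omega \in \{\gamma, \delta\}$ and $k \in \{1,2,3\}$; for a degree-$3$ generator, the consequences arise from two iterated partial compositions, each with either $\gamma$ or $\delta$. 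Each consequence is then expressed as a coefficient vector over the $96$ ordered association types. Using Clifton's algorithm to evaluate $R_\lambda$ on the required permutations in $S_7$, I assemble $C_\lambda$ as a block matrix with $96 d_\lambda$ columns and compute its RCF to obtain $c_\lambda$.

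For $E_\lambda$, I expand the identity-permutation monomial of each of the $96$ association types using $[x,y,z] \mapsto xyz - yxz$ and $\langle x,y,z \rangle \mapsto xyz - yzx$, producing a single block row of size $d_\lambda \times 96 d_\lambda$ whose RCF has rank $e_\lambda$; then $a_\lambda = 96 d_\lambda - e_\lambda$. As a safeguard I would compute the nullspace matrix $N_\lambda$ of $E_\lambda$ in RCF and verify the inclusion $C_\lambda \subseteq N_\lambda$ directly, so that the dimension equality $c_\lambda = a_\lambda$ promotes to the module equality $C_\lambda = N_\lambda$, giving the full $\lambda$-isotypic component of the kernel rather than merely its dimension.

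The main obstacle is the scale of the computation. For the largest dimensions ($d_{421} = d_{321^2} = 35$), the matrix $C_\lambda$ has $96 \cdot 35 = 3360$ columns, and once all consequences and their $S_7$-lifts are included the row count will reach several thousand; comparable sizes apply at every other $\lambda$. Memory-efficient sparse rational arithmetic, a disciplined partition-by-partition loop, and a modular cross-check (reducing mod a large prime to confirm ranks) are essential. Subject to this, the uniform equalities $c_\lambda = a_\lambda$ for all $\lambda \vdash 7$ yield the vanishing of $N_\lambda / C_\lambda$ in every isotypic component, which is precisely the theorem.
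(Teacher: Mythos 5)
Your proposal follows essentially the same route as the paper: lift all lower-degree identities (including the single-operation degree-7 identities of Lemmas \ref{commutatordegree7} and \ref{translatordegree7}) into the $96$ association types via iterated partial compositions with $\gamma$ and $\delta$, form the consequence matrix $C_\lambda$ and the $d_\lambda \times 96 d_\lambda$ expansion matrix $E_\lambda$ for each $\lambda \vdash 7$ using Clifton's algorithm, and verify both the rank equality $c_\lambda = a_\lambda = 96 d_\lambda - e_\lambda$ and the coincidence of the row spaces. This matches the paper's proof, which records the same data as the stacked matrix $CD_\lambda$ of size $656 d_\lambda \times 96 d_\lambda$ together with the embedded single-operation blocks $D^1_\lambda$, $D^2_\lambda$.
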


\begin{proof}
Each identity $J$ in equations \eqref{alternating}--\eqref{comtrans}
produces 12 consequences in degree 5:
the partial compositions
$J \circ_k \gamma$, $J \circ_k \delta$ ($1 \le k \le 3$),
$\gamma \circ_k J$, $\gamma \circ_k J$ ($1 \le k \le 3$),
where $\gamma$, $\delta$ denote commutator and translator.
Including \eqref{degree5commutator}, \eqref{degree5translator},
\eqref{degree5mixed1}--\eqref{degree5mixed3}, we obtain 41 consequences $K$ in degree 5.
Each of these produces 16 consequences in degree 7:
$K \circ_k \gamma$, $K \circ_k \delta$ ($1 \le k \le 5$),
$\gamma \circ_\ell K$, $\gamma \circ_\ell K$ ($1 \le \ell \le 3$).
For each partition $\lambda$ of 7, the matrix $C_\lambda$ representing the consequences
has size $656 d_\lambda \times 96 d_\lambda$.
Let $D^1_\lambda$, $D^2_\lambda$ be obtained from the matrices $C_\lambda$ in the proofs of 
Lemmas \ref{commutatordegree7}, \ref{translatordegree7} by embedding the 12 association types
for one operation into the 96 association types for two operations.
The row space of $D^1_\lambda$ (resp. $D^2_\lambda$) contains the identities
satisfied by the commutator (resp. translator) in degree 7.
We stack $C_\lambda$ on top of $D^1_\lambda$ and $D^2_\lambda$ to obtain $CD_\lambda$;
we denote its rank by $c_\lambda$.
The expansion matrix $E_\lambda$ has size $d_\lambda \times 96 d_\lambda$;
we denote its rank by $e_\lambda$, so it has nullity $a_\lambda = 96 d_\lambda - e_\lambda$.
Let $N_\lambda$ be the matrix whose row space is the nullspace of $E_\lambda$.
For each partition $\lambda$, we find that $c_\lambda = a_\lambda$ and that 
the RCFs of $C_\lambda$ and $N_\lambda$ coincide.
\end{proof}


\section{Enveloping algebras for $2 \times 2$ matrices}
\label{envelopesection}

In this final section we use noncommutative Gr\"obner bases \cite{BF} 
to construct the universal associative enveloping algebras of 
the nonassociative triple systems $A^C$, $A^T$, and $A^{CT}$,
obtained by applying the special commutator, special translator, and both together,
to the associative triple system $A = ( a_{ij} )$ of $2\times 2$ matrices.
This method has previously been used  
for the universal envelopes of triple systems \cite{E2}
obtained by applying trilinear operations \cite{BPCTO}
to the $2 \times 2$ matrices with $a_{11} = a_{22} = 0$,
and for infinite families of simple anti-Jordan triple systems \cite{E1,E3}.
This approach to the representation theory of comtrans algebras
is based on the special commutator and special translator in an associative triple system,
and therefore differs essentially from the approach of 
Im, Shen \& Smith \cite{IM2009,IM2011,SS1992}.

\begin{definition}
Let $A$ be the associative triple system of $2\times 2$ matrices.
Let $B$ and $X$ be the basis of matrix units
and a set of symbols in bijection with $B$:
\[
B = \{ E_{ij} \mid 1 \le i,j \le 2 \},
\qquad
X = \{ e_{ij} \mid 1 \le i,j \le 2 \},
\qquad
\eta( E_{ij} ) = e_{ij}.
\]
Extend $\eta$ linearly to the injective map $\eta\colon A \to F\langle X\rangle$
where $F\langle X\rangle$ is the free associative algebra generated by $X$.
Define three ideals in $F\langle X\rangle$ as follows:
\begin{alignat*}{2}
&
I^C = \langle G^C \rangle,
&\qquad
&
G^C = \{
e_{ij} e_{kl} e_{st} - e_{kl} e_{ij} e_{st} - \eta([ E_{ij}, E_{kl}, E_{st}])
\},
\\
&
I^T = \langle G^T \rangle,
&\qquad
&
G^T = \{
e_{ij} e_{kl} e_{st} - e_{kl} e_{st} e_{ij} - \eta (\langle E_{ij}, E_{kl}, E_{st} \rangle)
\},
\\
&
I^{CT} = \langle G^{CT} \rangle,
&\qquad
&
G^{CT} = G^T \cup G^C.
\end{alignat*}
The corresponding universal associative enveloping algebras are
\[
U(A^C) = F\langle X\rangle/ I^C,
\qquad
U(A^T) = F\langle X\rangle/ I^T,
\qquad
U(A^{CT}) = F\langle X\rangle/ I^{CT}.
\]
We write $e_{ij} \prec e_{kl}$ if $i < k$, or $i = k$, $j < l$;
and also $e_{11}, e_{12}, e_{21}, e_{22} = a, b, c, d$.
\end{definition}

\begin{lemma}\label{AC}
The universal enveloping algebra $U(A^C)$ has basis
\[
\mathfrak{B}^C = \{ \, 1, \, a,\, b,\, c,\, d,\, a^2,\, ab, \, ca,\, cb \, \}.
\]
\end{lemma}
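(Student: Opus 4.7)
The plan is to compute a noncommutative Gröbner basis for the ideal $I^C \subset F\langle X \rangle$ with respect to the deglex monomial order extending $a \prec b \prec c \prec d$, then read off the normal forms and verify they coincide with $\mathfrak{B}^C$. The correctness of this strategy is guaranteed by Bergman's diamond lemma: once we have a Gröbner basis, the set of monomials not divisible by any leading term projects to a linear basis of the quotient $U(A^C)$.

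The initial generating set $G^C$ consists of relations of the form $e_{ij}e_{kl}e_{st} - e_{kl}e_{ij}e_{st} - \eta([E_{ij},E_{kl},E_{st}])$, and the right-hand term $\eta([E_{ij},E_{kl},E_{st}]) = \delta_{jk}\delta_{ls}e_{it} - \delta_{il}\delta_{ks}e_{kt}$ has degree $1$ (or is zero). Whenever $e_{ij} \prec e_{kl}$, the element reads as a rewrite rule
\[
e_{kl}e_{ij}e_{st} \;\longrightarrow\; e_{ij}e_{kl}e_{st} + \text{(degree-$1$ correction)},
\]
which permits moving smaller matrix-unit symbols leftward at the cost of producing lower-degree terms. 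First I would enumerate these rules explicitly for the $12$ triples $(i j, k l, s t)$ with $e_{ij} \prec e_{kl}$ and tabulate the correction terms from the associative triple product on $B$.

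Next I would run the Gröbner completion: form S-polynomials from length-$2$ overlaps $e_{mn}e_{kl}e_{ij}e_{st}$ of leading terms (requiring $e_{ij} \prec e_{kl} \prec e_{mn}$), reduce them modulo the current basis, and append any non-reducible remainders. Key outputs I expect are additional low-degree relations, namely degree-$2$ elements that eliminate monomials such as $b^2,\ ac,\ bc,\ bd,\ c^2,\ cd,\ d^2,\ da,\ db,\ dc,\ad$ and the like, leaving precisely the four degree-$2$ survivors $a^2, ab, ca, cb$; and possibly further relations that prevent nontrivial degree-$3$ normal forms from arising (so that e.g.\ $aba$ collapses). After the Gröbner basis stabilizes, I would enumerate all irreducible monomials by increasing degree: degree $0$ gives $1$, degree $1$ gives $a,b,c,d$, degree $2$ gives $a^2, ab, ca, cb$ (the four products $E_{i1}E_{1j}$ that are nonzero in $M_2$), and degree $\ge 3$ yields nothing new, matching $\mathfrak{B}^C$.

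Finally, to confirm linear independence of $\mathfrak{B}^C$ in $U(A^C)$ (and to guard against a subtle error in the Gröbner computation), I would exhibit a concrete $9$-dimensional associative algebra $V$ with a linear map $\varphi\colon F\langle X\rangle \to V$ that annihilates $I^C$ and sends the nine candidates in $\mathfrak{B}^C$ to a linearly independent set; combined with the Gröbner upper bound this pins $\dim U(A^C) = 9$. The main obstacle is the Gröbner completion itself: without \emph{a priori} bounds on the length of chains of reductions, one must verify termination of S-polynomial processing and confluence at each new length, which in practice is delegated to computer algebra (Maple or Bergman). Verifying the resulting normal-form set against $\mathfrak{B}^C$ and extracting the explicit multiplication table on $\mathfrak{B}^C$ is then mechanical.
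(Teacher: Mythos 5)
Your proposal follows essentially the same route as the paper: both compute a noncommutative Gr\"obner basis for $I^C$ in the stated order, verify that all compositions (S-polynomials) reduce to zero, and read off $\mathfrak{B}^C$ as the standard monomials via the diamond lemma — the paper reduces the 64 generators to 24, processes 56 compositions, and lands on a 16-element Gr\"obner basis whose degree-3 members $a^3-a$, $a^2b-b$, $ca^2-c$, $cab-d$ are exactly the relations you anticipate must exist to truncate the normal forms above degree 2. Two small slips in your setup — the correction term is $\delta_{jk}\delta_{ls}e_{it}-\delta_{li}\delta_{js}e_{kt}$ (your $\delta_{ks}$ should be $\delta_{js}$), and there are $6\times 4=24$ nontrivial rewrite rules rather than 12 — would be caught in executing the computation and do not affect the strategy.
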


\begin{proof}
The set $G^T$ contains 64 elements; after putting each generator in standard form
(reverse deglex order and monic), only 24 distinct generators remain:
\[
\begin{array}{l@{\quad}l@{\quad}l@{\quad}l}
G_1 = ba^2 {-} aba, & G_2 = bab {-} ab^2, & G_3 = bac {-} abc {+} a, & G_4 = bad {-} abd {+} b,
\\
G_5 = ca^2 {-} aca {-} c, & G_6 = cab {-} acb {-} d, & G_7 = cac {-} ac^2, & G_8 = cad {-} acd,
\\
G_9 = cba {-} bca {+} a, & G_{10} = cb^2 {-} bcb {+} b, & G_{11} = cbc {-} bc^2 {-} c, & G_{12} = cbd {-} bcd {-} d,
\\
G_{13} = da^2 {-} ada, & G_{14} = dab {-} adb, & G_{15} = dac {-} adc, & G_{16} = dad {-} ad^2,
\\
G_{17} = dba {-} bda, & G_{18} = db^2 {-} bdb, & G_{19} = dbc {-} bdc {+} a, & G_{20} = dbd {-} bd^2 {+} b,
\\
G_{21} = dca {-} cda {-} c, & G_{22} = dcb {-} cdb {-} d, & G_{23} = dc^2 {-} cdc, & G_{24} = dcd {-} cd^2.
\end{array}
\]
There are only 56 distinct normal forms of the compositions of these generators;
the corresponding compositions appear in Figure \ref{GBIC}.

\begin{figure}[ht]
$
\begin{array}{l@{\,\,}l@{\,\,}l@{\,\,}l}
\mathcal{S}_1 {\,=\,} G_3 a^2 {-} ba G_5{,} &
\mathcal{S}_2 {\,=\,} G_3 ab {-} ba G_6{,} &
\mathcal{S}_3 {\,=\,} G_3 ac {-} ba G_7{,} &
\mathcal{S}_4 {\,=\,} G_3 ad {-} ba G_8{,}
\\
\mathcal{S}_5 {\,=\,} G_3 ba {-} ba G_9{,} &
\mathcal{S}_6 {\,=\,} G_3 b^2 {-} ba G_{10}{,} &
\mathcal{S}_7 {\,=\,} G_3 bc {-} ba G_{11}{,} &
\mathcal{S}_8 {\,=\,} G_3 bd {-} ba G_{12}{,}
\\
\mathcal{S}_9 {\,=\,} G_4 ba {-} ba G_{17}{,} &
\mathcal{S}_{10} {\,=\,} G_{4} b^2 {-} ba G_{18}{,} &
\mathcal{S}_{11} {\,=\,} G_{4}bc {-} ba G_{19}{,} &
\mathcal{S}_{12} {\,=\,} G_4 bd {-} ba G_{20}{,}
\\
\mathcal{S}_{13} {\,=\,} G_4 ca {-}ba G_{21}{,} &
\mathcal{S}_{14} {\,=\,} G_4 cb {-} ba G_{22}{,} &
\mathcal{S}_{15} {\,=\,} G_{4} c^2 {-} ba G_{23}{,} &
\mathcal{S}_{16} {\,=\,} G_{4}cd {-}bs G_{24}{,}
\\
\mathcal{S}_{17} {\,=\,} G_7 a^2 {-} ca G_5{,} &
\mathcal{S}_{18} {\,=\,} G_7 ab {-} ca G_6{,} &
\mathcal{S}_{19} {\,=\,} G_7 ac {-} ca G_7{,} &
\mathcal{S}_{20} {\,=\,} G_7 ad {-} ca G_8{,}
\\
\mathcal{S}_{21} {\,=\,} G_7 ba {-} ca G_9{,} &
\mathcal{S}_{22} {\,=\,} G_7 b^2 {-} ca G_{10}{,} &
\mathcal{S}_{23} {\,=\,} G_{7} bc {-} ca G_{11}{,} &
\mathcal{S}_{24} {\,=\,} G_7 bd {-} ca G_{12}{,}
\\
\mathcal{S}_{25} {\,=\,} G_8 ba {-} ca G_{17}{,} &
\mathcal{S}_{26} {\,=\,} G_{8} b^2 {-} ca G_{18}{,} &
\mathcal{S}_{27} {\,=\,} G_{8} bc {-} ca G_{19}{,} &
\mathcal{S}_{28} {\,=\,} G_8 bd {-}ca G_{20}{,}
\\
\mathcal{S}_{29} {\,=\,} G_9 a {-} c G_1{,} &
\mathcal{S}_{30} {\,=\,} G_9 b {-} c G_2{,} &
\mathcal{S}_{31} {\,=\,} G_9 c {-} c G_3{,} &
\mathcal{S}_{32} {\,=\,} G_9 d {-} c G_4{,}
\\
\mathcal{S}_{33} {\,=\,} G_{11} ba {-} cb G_9{,} &
\mathcal{S}_{34} {\,=\,} G_{11} b^2 {-} cb G_{10}{,} &
\mathcal{S}_{35} {\,=\,} G_{11} bc {-} cb G_{11}{,} &
\mathcal{S}_{36} {\,=\,} G_{11} bd {-}cb G_{12}{,}
\\
\mathcal{S}_{37} {\,=\,} G_{12} a^2 {-} cb G_{13}{,} &
\mathcal{S}_{38} {\,=\,} G_{12} ab {-} cb G_{14}{,} &
\mathcal{S}_{39} {\,=\,} G_{12} ac {-} cb G_{15}{,} &
\mathcal{S}_{40} {\,=\,} G_{12} ad {-} cb G_{16}{,}
\\
\mathcal{S}_{41} {\,=\,} G_{12} ba {-} cb G_{17}{,} &
\mathcal{S}_{42} {\,=\,} G_{12} b^2 {-} cb G_{18}{,} &
\mathcal{S}_{43} {\,=\,} G_{12} bc {-} cb G_{19}{,} &
\mathcal{S}_{44} {\,=\,} G_{12} bd {-} cb G_{20}{,}
\\
\mathcal{S}_{45} {\,=\,} G_{17}a {-} d G_1{,} &
\mathcal{S}_{46} {\,=\,} G_{17} b {-} d G_{2}{,} &
\mathcal{S}_{47} {\,=\,} G_{21} a {-} d G_5{,} &
\mathcal{S}_{48} {\,=\,} G_{21} b {-} d G_6{,}
\\
\mathcal{S}_{59} {\,=\,} G_{21} c {-} d G_{7}{,} &
\mathcal{S}_{50} {\,=\,} G_{21} d {-} d G_8{,} &
\mathcal{S}_{51} {\,=\,} G_{22} a^2 {-} dc G_1{,} &
\mathcal{S}_{52} {\,=\,} G_{22} ab {-} dc G_2{,}
\\
\mathcal{S}_{53} {\,=\,} G_{22} ac {-} dc G_3{,} &
\mathcal{S}_{54} {\,=\,} G_{22} ad {-} dc G_4{,} &
\mathcal{S}_{55} {\,=\,} G_{22} a {-} d G_9{,} &
\mathcal{S}_{56} {\,=\,} G_{22} b {-} d G_{10}.
\end{array}
$
\vspace{-4mm}
\caption{Compositions of the generators for the ideal $I^C \subset F\langle X \rangle$}
\label{GBIC}
\end{figure}

To compute normal forms we eliminate all occurrences of leading monomials of the generators.
We write $\equiv$ for congruence modulo $G_1$, \dots, $G_{24}$.
For example, to find the normal form of $\mathcal{S}_1$,
we eliminate the leading monomials of $G_5$, $G_1$, $G_3$:
\begin{align*}
\mathcal{S}_1
&=
(bac - abc + a)a^2- ba (ca^2 - aca - c)
=
(- abc + a)a^2- ba (- aca - c)
\\[-1mm]
&\equiv
- ab(aca+c) + a^3 + (aba)ca + ba c
\\[-1mm]
&\equiv
- a ((abc-a) a+bc) + a^3 + a(abc-a) a + (abc-a)
=  a^3 -a.
\end{align*}
Similar calculations produce the normal forms of
$\mathcal{S}_2$, \dots $\mathcal{S}_{56}$:
\[
\begin{array}{l}
a^2b-b,\;
a^2c, \;
a^2d, \;
aba, \;
ab^2, \;
abc {-} a, \;
abd {-} b, \;
b^2a, \;
b^3, \;
b^2c, \;
b^2d, \;
bca {-} ada {-} a,
\\
bcb {-} adb {-} b, \;
bc^2 {-} adc, \;
bcd {-} ad^2, \;
c^2a, \;
c^2b, \;
c^3, \;
c^2d, \;
cda, \;
cdb, \;
cdc, \;
cd^2, \;
d^2a, \;
d^2b,
\\
d^2c {-} c, \;
d^3 {-} d, \;
da {-} bc {+} aa, \;
db {-} bd {+} ab, \;
dc {-} ca {+}3 ac, \;
dd {-} cb {+}3 ad, \;
bca {-} a, \;
bcb {-} b,
\\
bc^2, \;
bcd, \;
d^2a {-} bca {+} ada {+} a, \;
d^2b {-} bcb {+} adb {+} b, \;
d^2c {-} bc^2 {+} adc {-} c, \;
d^3 {-} bcd {+} ad^2 {-} d,
\\
bda, \;
bdb, \;
bdc {-} a, \;
bd^2 {-} b, \;
ba, \; b^2, \;
dc {-} ca {+} ac, \;
d^2{-} cb {+} ad, \;
c^2, \;
cd, \;
\\
d^2a {-}3 ada {+} abc {-} a^3, \;
d^2b {-}3 adb {+} abd {-} a^2b, \;
d^2c {-} adc {+} aca {-} a^2c {-} c,
\\
d^3 {-} ad^2 {+} acb {-} a^2d {-} d, \;
da {-} \tfrac{1}{3} bc {+}\tfrac{1}{3} a^2, \;
db {-}\tfrac{1}{3} bd {+}\tfrac{1}{3} ab.
\end{array}
\]
Including these normal forms with the original 24 generators, and then self-reducing, 
produces the following 16 generators:
\begin{equation}
\label{UACGB}
\begin{array}{llllllll}
ac, & ad, & ba, & b^2, & bc{-}a^2, & bd{-}ab, & c^2, & cd,
\\
da, & db, & dc{-}ca, & d^2{-}cb, & a^3{-}a, & a^2b{-}b, & ca^2{-}c, & cab{-}d.
\end{array}
\end{equation}
All compositions of these 16 generators reduce to 0, 
and so we have a Gr\"obner basis for $I^C$.
A basis for $U(A^C)$ consists of the cosets of those monomials
which are not divisible by the leading monomial of any element of the Gr\"obner basis.
\end{proof}

\begin{lemma}\label{rr}
In $U(A^C )$ we have the relations
\[
\begin{array}{l}
ac = ad = ba = b^2 = cd = da = db = c^2 = 0,
\\
bc = a^2, \;\; bd = ab, \;\; dc = ca, \;\; d^2 = cb, \;\; 
a^3 = a, \;\; a^2b = b, \;\; ca^2= c, \;\; cab = d.
\end{array}
\]
\end{lemma}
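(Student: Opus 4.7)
The plan is to read the claim directly off the Gr\"obner basis constructed in the proof of Lemma \ref{AC}. By that lemma the sixteen polynomials listed in \eqref{UACGB} form a self-reduced Gr\"obner basis for the two-sided ideal $I^C \subset F\langle X \rangle$ under the reverse deglex order with $a \prec b \prec c \prec d$. Each of these sixteen polynomials therefore lies in $I^C$ and hence represents the zero element of the quotient $U(A^C) = F\langle X \rangle / I^C$. So the proof is a one-line invocation of Lemma \ref{AC} followed by transcription.

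Concretely, rewriting each of the sixteen generators in the form ``leading monomial equals the rest'' produces exactly the sixteen identities of Lemma \ref{rr}. The eight pure-monomial generators $ac$, $ad$, $ba$, $b^2$, $c^2$, $cd$, $da$, $db$ supply the eight vanishing relations on the first line; the four binomials $bc - a^2$, $bd - ab$, $dc - ca$, $d^2 - cb$ yield the four substitution relations $bc = a^2$, $bd = ab$, $dc = ca$, $d^2 = cb$; and the four cubic binomials $a^3 - a$, $a^2b - b$, $ca^2 - c$, $cab - d$ give the remaining four cubic equalities. No additional calculation is required, since the reductions of the original 24 generators $G_1,\dots,G_{24}$, the formation and normal-form reduction of the 56 S-polynomials $\mathcal{S}_1,\dots,\mathcal{S}_{56}$ displayed in Figure \ref{GBIC}, and the final self-reduction producing \eqref{UACGB}, were all carried out in the proof of Lemma \ref{AC}.

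There is no substantive obstacle; the only step that might deserve the name is purely bookkeeping, namely verifying that the transcription from the polynomial list \eqref{UACGB} into the equation-format list of Lemma \ref{rr} is term-for-term correct, including signs. If desired, each of the sixteen relations can additionally be checked independently by applying the inverse of $\eta$ and evaluating both sides as elements of the associative triple system $A$ of $2 \times 2$ matrices; the images match entrywise, which serves as a sanity check but is not logically needed for the proof.
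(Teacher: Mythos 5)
Your proposal matches the paper's proof, which simply states that the lemma follows immediately from the Gr\"obner basis \eqref{UACGB}; reading off each of the sixteen generators as a relation in the quotient is exactly the intended argument. The extra remarks about transcription and the optional sanity check are fine but add nothing beyond what the paper already does.
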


\begin{proof}
This follows immediately from the Gr\"obner basis \eqref{UACGB}.
\end{proof}

\begin{lemma}
The nonzero structure constants of $U(A^C)$ are
\[
\begin{array}{l@{\;\;\;}l@{\;\;\;}l@{\;\;\;}l@{\;\;\;}l@{\;\;\;}l@{\;\;\;}l}
a \cdot a = a^2, &
a\cdot b = ab, &
a\cdot a^2 = a, &
a\cdot ab = b, &
b\cdot c = a^2, &
b\cdot d = ac,
\\
b\cdot ca = a, &
b\cdot cb = b, &
c\cdot a = ca, &
c\cdot b = cb, &
c\cdot a^2 = c, &
c\cdot ab =d,
\\
d\cdot c = ca, &
d\cdot d = cb, &
d\cdot ca = c, &
d\cdot cb = d, &
a^2\cdot a = a, &
a^2 \cdot b = b,
\\
a^2 \cdot a^2 = a^2, &
a^2 \cdot ab = ab, &
ab \cdot c = a, &
ab \cdot d = b, &
ab \cdot ca = a^2, &
ab\cdot cb = ab,
\\
ca \cdot a = c, &
ca \cdot b = d, &
ca \cdot a^2 = ca, &
ca \cdot ab = cb, &
cb\cdot c = c, &
ca \cdot d = d,
\\
cb \cdot ca = ca, &
cb \cdot cb = cb.
\end{array}
\]
\end{lemma}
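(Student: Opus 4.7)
The plan is to tabulate the $8 \times 8 = 64$ products $x \cdot y$ with $x, y$ ranging over the non-identity basis elements $\{a, b, c, d, a^2, ab, ca, cb\}$ (products involving $1$ being trivial) and to reduce each to normal form using the rewrite rules extracted from the Gr\"obner basis \eqref{UACGB}, as recorded in Lemma \ref{rr}. Eight of those rules annihilate a product: $ac, ad, ba, b^2, cd, da, db, c^2 \longrightarrow 0$. The remaining eight shorten or absorb products: $bc \to a^2$, $bd \to ab$, $dc \to ca$, $d^2 \to cb$, $a^3 \to a$, $a^2 b \to b$, $ca^2 \to c$, $cab \to d$. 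Since \eqref{UACGB} is a Gr\"obner basis and therefore confluent, the normal form of each concatenated word $xy$ is uniquely determined, and this normal form is precisely what is recorded in the multiplication table.

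First I would perform one sweep in which I discard any pair $(x, y)$ whose concatenation $xy$ begins with (or reduces in one step to a word beginning with) one of the null substrings $ac, ad, ba, b^2, cd, da, db, c^2$ that cannot be absorbed into a longer overlapping rewrite. For instance, $a \cdot c$, $a \cdot d$, $a \cdot ca$, $b \cdot a$, $d \cdot a$, and many others fall in this category and contribute nothing to the table. This immediately eliminates well over half of the 64 cases.

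Next I would reduce the surviving products, each of which requires at most two or three applications of the rewrite rules. Representative calculations: $ab \cdot ca = a(bc)a = a \cdot a^2 \cdot a = a^3 \cdot a = a \cdot a = a^2$; $cb \cdot ca = c(bc)a = c a^3 = ca$; $c \cdot ab = cab = d$; $b \cdot cb = (bc)b = a^2 b = b$; $d \cdot d = d^2 = cb$; $d \cdot cb = (dc)b = cab = d$. Each entry of the claimed table is obtained in this fashion, and associativity of $U(A^C)$ provides a cross-check: the products $(xy)z$ and $x(yz)$ computed by the table must agree for every triple, and any mismatch would flag an error.

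The main obstacle is purely combinatorial and organizational rather than mathematical: there is no deep step beyond the Gr\"obner basis calculation already carried out in Lemma \ref{AC}, but one must be careful to apply rewrites in a consistent order whenever a word admits several overlapping leading monomials (confluence guarantees that the endpoint is independent of that order, but the intermediate steps differ) and to ensure that no product is overlooked or double-counted.
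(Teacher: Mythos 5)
Your approach is correct and is exactly the paper's: the paper's entire proof is ``This follows immediately from Lemma \ref{rr},'' i.e.\ reduce each concatenated word $xy$ of basis elements to normal form using the rewrite rules from the Gr\"obner basis \eqref{UACGB}, which is what you do, and your representative calculations all check out. One remark: a faithful execution of your plan (or your proposed associativity cross-check) would actually expose two typographical slips in the stated table --- $b\cdot d = ac$ should read $b\cdot d = ab$ (since $bd\to ab$ while $ac=0$, so $ac$ cannot be a nonzero structure constant), and $ca\cdot d = d$ should read $cb\cdot d = d$ (since $cad\to 0$ but $cbd\to cab\to d$).
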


\begin{proof}
This follows immediately from Lemma \ref{rr}.
\end{proof}

\begin{theorem}
The Wedderburn decomposition of $U(A^C)$ is
\[
U(A^C) = \rational \oplus M_{2\times 2}(\rational) \oplus M_{2\times 2}(\rational),
\]
where $M_{2\times 2}(\rational)$ is the ordinary associative algebra of $2\times 2$ matrices.
\end{theorem}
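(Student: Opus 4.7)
The plan is to exhibit explicit central primitive orthogonal idempotents $e_0, e_+, e_-$ summing to $1$ in $U(A^C)$, and to identify matrix units in the two nontrivial components. Since $\dim U(A^C) = 9$ by Lemma \ref{AC}, and $\dim(\rational \oplus M_{2\times 2}(\rational) \oplus M_{2\times 2}(\rational)) = 1 + 4 + 4 = 9$, it suffices to produce such idempotents and, in each of the two four-dimensional summands, a set of matrix units satisfying the defining relations of $M_{2\times 2}(\rational)$.

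First, reading off the structure constants from the lemma preceding the theorem, the four elements $\{a^2, ab, ca, cb\}$ are closed under multiplication with exactly the matrix-unit relations: setting $a^2 \leftrightarrow E_{11}$, $ab \leftrightarrow E_{12}$, $ca \leftrightarrow E_{21}$, $cb \leftrightarrow E_{22}$, all 16 products $E_{ij} E_{k\ell}$ come out correctly. Thus they span a copy of $M_{2\times 2}(\rational)$ with unit $f = a^2 + cb$. A direct check against each generator $a,b,c,d$ (using Lemma \ref{rr}: $a^3=a$, $a^2b=b$, $bc=a^2$, $bd=ab$, $ca^2=c$, $cab=d$, $dc=ca$, $d^2=cb$, and the eight zero products) shows $f$ is central, hence $e_0 = 1 - f$ is a central idempotent. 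Moreover $e_0$ annihilates every nonidentity element of $\mathfrak{B}^C$, giving $e_0 U(A^C) = \rational\, e_0 \cong \rational$.

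Next, to split $f U(A^C)$ further, I set
\[
e_\pm \;=\; \tfrac12\bigl(a^2 + cb \pm (a + d)\bigr),
\]
and verify, again via the relations of Lemma \ref{rr}, that $e_+$ and $e_-$ are orthogonal central idempotents with $e_+ + e_- = f$. Inside $e_+ U(A^C)$ I introduce
\[
p_1 = \tfrac12(a^2 + a),\quad q_1 = \tfrac12(cb + d),\quad r_1 = \tfrac12(b + ab),\quad s_1 = \tfrac12(c + ca),
\]
and check $p_1 + q_1 = e_+$, $p_1 r_1 = r_1 = r_1 q_1$, $q_1 s_1 = s_1 = s_1 p_1$, $r_1 s_1 = p_1$, $s_1 r_1 = q_1$, $r_1^2 = s_1^2 = 0$, $p_1 s_1 = q_1 r_1 = 0$, so that $\{p_1, r_1, s_1, q_1\}$ is a complete set of matrix units for a copy of $M_{2\times 2}(\rational)$. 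Setting analogously
\[
p_2 = \tfrac12(a^2 - a),\; q_2 = \tfrac12(cb - d),\; r_2 = \tfrac12(ab - b),\; s_2 = \tfrac12(ca - c),
\]
and carrying out the parallel computation produces a second copy of $M_{2\times 2}(\rational)$ inside $e_- U(A^C)$.

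The only real obstacle is the bookkeeping of the matrix-unit verifications, but each identity reduces to a one-line computation in the 32 nonzero products listed in the structure-constant lemma. Once the matrix units in $e_\pm U(A^C)$ are in place, each of these two ideals contains a four-dimensional subalgebra; combined with $\dim e_0 U(A^C) = 1$ and the equality $\dim U(A^C) = 9$, the containments are forced to be equalities, yielding
\[
U(A^C) \;=\; \rational\, e_0 \;\oplus\; e_+ U(A^C) \;\oplus\; e_- U(A^C) \;\cong\; \rational \oplus M_{2\times 2}(\rational) \oplus M_{2\times 2}(\rational),
\]
as claimed.
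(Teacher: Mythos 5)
Your proposal is correct, and it is built on the same skeleton as the paper's proof: the central idempotents you call $e_0$, $e_+$, $e_-$ are exactly the paper's $e_1 = z_1 - z_3$, $e_3 = \tfrac12(z_2+z_3)$, and (up to a sign slip in the paper, whose $e_2 = \tfrac12(z_2 - z_3)$ satisfies $e_2^2 = -e_2$ and so should be $\tfrac12(z_3 - z_2)$, as you have it) $e_2$, where $z_2 = a+d$ and $z_3 = a^2+cb$. The difference is in how each argument finishes. The paper first checks that the radical is zero, computes the center and its splitting, and then only records that the ideals generated by the three central idempotents have dimensions $1$, $4$, $4$, explicitly omitting the construction of isomorphisms with $M_{2\times 2}(\rational)$; strictly speaking, a $4$-dimensional simple $\rational$-algebra could also be a quaternion division algebra, so that omitted step is doing real work. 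You instead exhibit explicit matrix units $\{p_i, r_i, s_i, q_i\}$ in each summand $e_\pm U(A^C)$, which simultaneously supplies the omitted isomorphisms, forces each summand to be $M_{2\times 2}(\rational)$ rather than a division algebra, and renders the separate semisimplicity check unnecessary (a sum of matrix algebras has zero radical). I verified your matrix-unit relations against Lemma \ref{rr}; they all hold. One caution: the structure-constant table in the paper contains typos (e.g.\ $b\cdot d = ac$ should be $b \cdot d = ab$, and $ca\cdot d = d$ should be $cb\cdot d = d$), so your verifications are better grounded in Lemma \ref{rr} directly, as you in fact do. Your closing dimension count ($1 + 4 + 4 = 9 = \dim U(A^C)$ forces the containments to be equalities) is sound.
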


\begin{proof}
Following \cite{BW},
we first verify that the radical is zero and hence $U(A^C)$ is semisimple.
The center $Z(U(A^C))$ has this basis and structure constants:
\[
\begin{array}{c}
z_1 = 1, \qquad z_2 = a+d, \qquad z_3 = a^2+cb,
\\
z_1 \cdot z_1 = z_1, \;\;
z_1 \cdot z_2 = z_1, \;\;
z_1 \cdot z_3 = z_3, \;\;
z_2 \cdot z_2 = z_3, \;\;
z_2 \cdot z_3 = z_2, \;\;
z_3 \cdot z_3 = z_3.
\end{array}
\]
The minimal polynomial of $z_3$ is $t^2- t$ and hence
$Z(U(A^C))= J \oplus K$ where $J = \langle z_3-z_1 \rangle$ and $K = \langle z_3 \rangle$.
We have $\dim J = 1$ with basis $z_3-z_1$, and $\dim K = 2$ with basis $z_2$, $z_3$.
In $K$ the identity element is $z_3$, and the minimal polynomial of $z_2$ is $t^2 - z_3$.
Hence $K$ splits into 1-dimensional ideals with bases $z_2- z_3$ and $z_2+z_3$.
After scaling, we obtain a basis of orthogonal idempotents for $Z(U(A^C))$:
\[
e_1 = z_1- z_3, \qquad e_2 = \tfrac12 ( z_2-z_3 ), \qquad e_3 = \tfrac12 ( z_2 + z_3 ).
\]
These elements of the center correspond to the following elements of $U(A^C)$:
\[
e_1 = 1-a^2+cb, \qquad e_2 = \tfrac12 (a+d-a^2+cb), \qquad e_3= \tfrac12 (a+d+a^2+cb).
\]
The ideals in $U(A^C)$ generated by $e_1$, $e_2$, $e_3$ have dimensions 1, 4, 4 respectively,
which completes the proof.
(We omit the construction of an isomorphism between each simple two-sided ideal in $U(A^C)$
and the corresponding matrix algebra.)
\end{proof}

\begin{lemma}
\label{Ba}
The universal enveloping algebra $U(A^T)$ has basis
\[
\mathfrak{B}^T =
\{ \, a^m, \, b, \, c, \, ab, \, ca, \, cb, \, a^n d \mid m \ge 0, \, 0 \le n \le 2 \, \}.
\]
\end{lemma}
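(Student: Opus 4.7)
The plan is to mirror the Gr\"obner basis computation used in the proof of Lemma~\ref{AC}. First, I would list the $64$ generators of $G^T$ indexed by triples of matrix-unit symbols, place each in standard form under reverse deglex (monic, leading monomial on the left), and discard duplicates and the identically zero relations; in particular $\langle E_{ii}, E_{ii}, E_{ii}\rangle = 0$, so the three all-diagonal triples contribute trivial generators and impose no constraint on powers of $a$ or $d$. This first hint that $U(A^T)$ is infinite-dimensional is already visible at the generator level, in sharp contrast with the commutator case where the relation $a^{3}-a$ eventually emerged from compositions.

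Next, I would systematically form every composition (overlap S-polynomial) of pairs of generators whose leading monomials overlap, reduce each composition to normal form modulo the current generating set, and adjoin any nonzero reductions. The enlarged set is then self-reduced by eliminating or replacing any generator whose leading monomial is divisible by another's, and the loop is iterated. By Bergman's diamond lemma \cite{BF}, termination produces a Gr\"obner basis $G^{T}_{\mathrm{GB}}$ of $I^{T}$, and a linear basis of $U(A^T) = F\langle X\rangle / I^{T}$ is given by the cosets of those monomials in $F\langle X\rangle$ that are not divisible by any leading monomial of $G^{T}_{\mathrm{GB}}$. The final step is to verify that this list of irreducible monomials coincides with $\mathfrak{B}^T$.

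The main obstacle is twofold. First, since the claimed basis is infinite because of the family $\{a^m : m \ge 0\}$, I must argue that neither any initial generator nor any composition ever produces a leading monomial that is a pure power of $a$; this requires an inductive control of the $a$-length of the leading monomial of every element that appears in the algorithm. Second, the mixed relations $a^{2}b - aba = b$ and $ba^{2} - a^{2}b = -b$ arising from triples such as $(11,11,12)$ and $(12,11,11)$ interact nontrivially with $d$-containing generators, and the finiteness of $\{a^{n}d : 0 \le n \le 2\}$ (as opposed to the full family $\{a^{n}d : n \ge 0\}$) must be traced to a specific composition that first forces a rewrite rule with leading monomial $a^{3}d$. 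The combinatorial bookkeeping of these compositions is the principal effort, but it is routine once the partial overlaps are organized by triple type. Once the Gr\"obner basis stabilizes, reading off the irreducible monomials and matching them against $\mathfrak{B}^T$ is immediate.
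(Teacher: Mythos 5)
Your plan is exactly the paper's method: put the $64$ generators of $G^T$ in standard form, self-reduce, adjoin normal forms of all compositions until a Gr\"obner basis of $I^T$ is obtained, and take as a basis of $U(A^T)$ the cosets of monomials not divisible by any leading monomial. Your two checkpoints are also borne out by the computation --- no pure power of $a$ ever occurs as a leading monomial, while the element $a^3d - a^4 - ad + a^2$ of the final $16$-element Gr\"obner basis supplies the leading monomial $a^3d$ that truncates the family $a^nd$ at $n \le 2$ --- so the proposal is correct and essentially identical to the paper's proof.
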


\begin{proof}
The set $G^T$ contains 64 elements; we put each generator in standard form
(reverse deglex order and monic) and self-reduce the set, obtaining 40 generators:
\[
\begin{array}{l@{\quad}l@{\quad}l@{\quad}l@{\quad}l}
aba{-}a^2b{+}b, & aca{-}a^2c, & ada{-}a^2d, & ba^2{-}a^2b{+}b, & bab{-}ab^2,
\\
bac{-}acb, & bad{-}adb, & b^2a{-}ab^2, & bca{-}abc, & bcb{-}b^2c{-}b,
\\
bda{-}abd{+}b, & bdb{-}bbd, & ca^2{-}a^2c{-}c, & cab{-}abc{-}d{+}a, & cac{-}ac^2,
\\
cad{-}adc, & cba{-}acb, & cb^2{-}b^2c, & cbc{-}bc^2{-}c, & cbd{-}bdc{-}d{+}a,
\\
c^2a{-}ac^2, & c^2b{-}bcc, & cda{-}acd, & cdb{-}bcd, & cdc{-}c^2d,
\\
da^2{-}a^2d, & dab{-}abd{+}b, & dac{-}acd, & dad{-}ad^2, & dba{-}adb,
\\
db^2{-}b^2d, & dbc{-}bcd, & dbd{-}bd^2{+}b, & dca{-}adc{-}c, & dcb{-}bdc{-}d{+}a,
\\
dc^2{-}c^2d, & dcd{-}cd^2, & d^2a{-}ad^2, & d^2b{-}bd^2{+}b, & d^2c{-}cd^2{-}c.
\end{array}
\]
There are 143 nontrivial compositions of these generators;
including their normal forms and self-reducing produces 16 generators:
\begin{equation}
\label{reT}
\begin{array}{l}
ac, \quad ba, \quad b^2, \quad bc{+}ad{-}a^2, \quad bd{-}ab, \quad c^2, \quad cd,
\\
da{-}ad, \quad db, \quad dc{-}ca, \quad d^2{-}cb{-}ad, \quad a^2b{-}b, \quad ca^2{-}c,
\\
cab{+}a^2d{-}a^3{-}d{+}a, \quad cad, \quad a^3d{-}a^4{-}ad{+}a^2.
\end{array}
\end{equation}
All compositions of these generators reduce to 0, so we have a Gr\"obner basis of $I^T$.
A basis for $U(A^T)$ consists of the cosets of those monomials which are not divisible by
the leading monomial of any element of the Gr\"obner basis.
\end{proof}

\begin{lemma}\label{cv}
In $U(A^T)$, we have the relations
\[
\begin{array}{l}
ac = ba = b^2 = c^2 = cd = db = cad = 0,
\\
bc = -ad+a^2, \quad bd = ab, \quad da = ad, \quad dc = ca, \quad d^2 = cb+ad,
\\
a^2b = b, \quad ca^2 = c, \quad cab = -a^2d+a^3+d-a, \quad a^3d = a^4+ad-a^2.
\end{array}
\]
\end{lemma}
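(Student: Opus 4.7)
The plan is to observe that Lemma \ref{cv} is an immediate corollary of Lemma \ref{Ba}: the asserted relations are simply the 16 Gröbner basis generators of $I^T$ from \eqref{reT}, rewritten as identities in the quotient $U(A^T) = F\langle X\rangle/I^T$.

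Concretely, I would walk through the list \eqref{reT} element by element, splitting each generator $g$ into its leading monomial $\mathrm{lm}(g)$ (under the reverse deglex order with $a \prec b \prec c \prec d$) and its tail $g - \mathrm{lm}(g)$. Since $g \in I^T$, we have $g \equiv 0$ in $U(A^T)$, which translates to $\mathrm{lm}(g) = -(g - \mathrm{lm}(g))$. For instance, $ac \in G$ yields $ac = 0$; $bc + ad - a^2 \in G$ with leading monomial $bc$ yields $bc = -ad + a^2$; $cab + a^2d - a^3 - d + a \in G$ with leading monomial $cab$ yields $cab = -a^2d + a^3 + d - a$; and so on. Matching the seven pure monomial generators $\{ac, ba, b^2, c^2, cd, db, cad\}$ gives the seven zero relations, and matching the remaining nine generators gives the nine nontrivial rewriting rules displayed in the statement.

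There is essentially no obstacle here, since the work has already been done in the proof of Lemma \ref{Ba}: the verification that \eqref{reT} is indeed a Gröbner basis (namely, that all compositions reduce to zero) was completed there, and the present lemma is merely a restatement of that output in the notation of \emph{relations} rather than of \emph{generators of an ideal}. The one small thing worth double-checking is the bookkeeping: confirming that the chosen leading monomial of each generator of \eqref{reT} under the reverse deglex order matches the left-hand side appearing in Lemma \ref{cv}, so that the rewriting rules are directed consistently with the monomial order used for the Gröbner basis computation.
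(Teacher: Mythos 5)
Your proposal is correct and coincides with the paper's argument: the paper's proof is the one-line observation that the relations follow immediately from the Gr\"obner basis \eqref{reT}, and your element-by-element matching of each generator's leading monomial to the left-hand side of the corresponding relation is exactly the intended (and easily verified) bookkeeping.
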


\begin{proof}
This follows immediately from the Gr\"obner basis \eqref{reT}.
\end{proof}

\begin{theorem}
For  $m, l \ge 0$ and $0 \le n \le 2$, the structure constants of $U(A^T)$ are
\begin{align}
\label{a}
&
a^m \cdot a^l = a^{m+n},
\qquad
a^m \cdot b =
  \begin{cases}
  ab & ( m \, \text{odd} ) \\[-1mm]
  b  & ( m \, \text{even} ),
  \end{cases}
\\
\label{b}
&
a^m \cdot a^n d =
  \begin{cases}
  a^{m+n} d & ( m{+}n < 3 ) \\[-1mm]
  a^{m+n+1}+ a^{m+n-2} d - a^{m+n-1} & ( m{+}n \ge 3 ),
  \end{cases}
\\
\notag
&
b \cdot c =  -ad + a^2,
\qquad
b \cdot ca = -a^2d+ a^3,
\qquad
b \cdot cb = b,
\qquad
b \cdot d=  ab,
\\
\notag
&
c \cdot a^m =
  \begin{cases}
  ca & ( m \, \text{odd} ) \\[-1mm]
  c  & ( m \, \text{even} ),
  \end{cases}
\qquad
c \cdot b = cb,
\qquad
c \cdot (ab) =  -a^2 d+ a^3+ d-a,
\\
\notag
&
ab \cdot c = - a^2d + a^3,
\qquad
ab \cdot ca = -ad + a^2,
\qquad
ab \cdot  cb = ab,
\\
\notag
&
ca \cdot b = - a^2 d+a^3+d-a,
\qquad
ca \cdot ab = cb,
\\
\notag
&
cb \cdot c = c,
\qquad
cb \cdot ca= ca,
\qquad
cb \cdot cb=cb,
\qquad
cb \cdot d =  -a^2 d+ a^3+ d-a,
\\
\notag
&
a^n d \cdot a^m =
  \begin{cases}
  a^{m+n} d & ( m{+}n < 3 ) \\[-1mm]
  a^{m+n+1}+ a^{m+n-2} d - a^{m+n-1} & ( m{+}n \ge 3 ),
  \end{cases}
\\
\notag
&
d \cdot c = ca,
\qquad
d^2 = cb+ad,
\qquad
d \cdot ca = c,
\qquad
d \cdot cb = -a^2 d+a^3+d-a,
\\
\notag
&
ad \cdot ca  = ac,
\qquad
ad \cdot d= ad^2,
\qquad
ad \cdot ad= a^4 +ad-a^2,
\\
\label{c}
&
ad \cdot a^2d =  a^5- a^3+a^2d,
\qquad
a^2d \cdot a^2d= a^6 +ad -a^2.
\end{align}
\end{theorem}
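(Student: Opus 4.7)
The plan is to derive each structure constant by direct computation in $F\langle X\rangle$, reducing every product to normal form via the rewrite rules of Lemma \ref{cv} (equivalently, the Gr\"obner basis \eqref{reT}). By Lemma \ref{Ba}, every coset in $U(A^T)$ has a unique expression as a linear combination of basis monomials in $\mathfrak{B}^T$, so each product of two basis elements reduces to such a combination in exactly one way.

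For the non-parametric products in the theorem, the argument is mechanical. Each such product is a word of length four or five in $\{a,b,c,d\}$ that contains a leading monomial of some Gr\"obner basis generator in \eqref{reT}. Iteratively replacing leading monomials by their reductions --- using the annihilating relations $ac = ba = b^2 = c^2 = cd = db = cad = 0$ and the substitutions $bc = -ad + a^2$, $bd = ab$, $da = ad$, $dc = ca$, $d^2 = cb + ad$, $a^2b = b$, $ca^2 = c$, $cab = -a^2 d + a^3 + d - a$ --- terminates in a normal form lying in $\mathfrak{B}^T$. As a representative sample, $b \cdot ca = (bc)a = (-ad+a^2)a = -a(da)+a^3 = -a \cdot ad + a^3 = -a^2d+a^3$, matching the stated value; similarly $ad \cdot ad = a(da)d = a^2 d^2 = a^2(cb + ad) = a^2cb + a^3d = 0 + a^3 d = a^4 + ad - a^2$ using $a^2 c = a(ac) = 0$ and the $a^3 d$ reduction.

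The parametric families \eqref{a} and \eqref{b} require a brief induction on the exponent. For $a^m \cdot b$, iterated application of $a^2 b = b$ gives $a^{2k}b = b$ and $a^{2k+1}b = ab$. For $a^m \cdot a^n d$, the case $m+n \leq 2$ is immediate since $a^{m+n}d \in \mathfrak{B}^T$. When $m+n \geq 3$, the base case $m+n=3$ is the Gr\"obner relation $a^3 d = a^4 + ad - a^2$, and the inductive step writes $a^{m+n} d = a \cdot (a^{m+n-1} d)$ and applies the induction hypothesis to the inner factor; the term $a^{m+n-2}d$ appearing in the claimed formula is itself reduced recursively when its exponent exceeds $2$. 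The symmetric formulas for $a^n d \cdot a^m$ follow by the same induction after invoking $da = ad$ to commute the relevant factors.

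The main obstacle is bookkeeping rather than genuine difficulty: the longest listed products, such as $ad \cdot a^2 d$ and $a^2d \cdot a^2d$, expand into words of length six or seven and require several successive rewrites. Confluence of the Gr\"obner basis \eqref{reT} guarantees that any termination order yields the same normal form, but in practice one should apply the annihilating relations first to clear factors like $ac$ and $cd$, then perform the substitutions for $bc$, $d^2$, and $cab$, and finally normalize high powers of $a$ via $a^3 d = a^4 + ad - a^2$. With this order of reductions, all structure constants in \eqref{a}--\eqref{c} and the finite list in between follow by routine calculation.
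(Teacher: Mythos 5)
Your proposal is correct and follows essentially the same route as the paper: both reduce each product of basis elements to normal form using the relations of Lemma \ref{cv} (the Gr\"obner basis \eqref{reT}), handle the parametric families $a^m\cdot b$ and $a^m\cdot a^n d$ by induction on the exponent with base case $a^3d=a^4+ad-a^2$, and treat the remaining finite list (including $ad\cdot a^2d$ and $a^2d\cdot a^2d$) by direct successive rewrites. Your sample reductions match the paper's computations, so no further comparison is needed.
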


\begin{proof}
The first relation of \eqref{a} is clear.
For the second we use induction on $m$.
The claim is clear for $0 \le m \le 1$.
For $m = 2$ we have $a^2b =b$ by Lemma \ref{cv}.
If $m$ is odd (resp.~even) then $m{-}1$ is even (resp.~odd) and
hence $a^{m} \cdot b = a (a^{m-1} b)= a b$ and
$a (a^{m-1} b)= a (a b) = a^2 b = b$
by induction.
Relation \eqref{b} is clear for $m{+}n < 3$.
We assume $m{+}n\geq 3$ and use induction on $m$.
For $m=1$, Lemma \ref{cv} gives
\[
a \cdot a^2 d = a^3 d =  a^4 + ad - a^2.
\]
For $m{+}n >3$, the inductive hypothesis implies
\begin{align*}
a^m \cdot a^n d
&=
a \cdot a^{m-1} a^n d
=
a \cdot \big( a^{m+n}+ a^{m+n-3} d - a^{m+n-2} \big)
\\[-1mm]
&=
a^{m+n+1} + a^{m+n-2} d - a^{m+n-1}.
\end{align*}
The proofs of the unnumbered relations are similar.
For relations \eqref{c} we have
\begin{align*}
ad \cdot a^2d
&=
adaad
=
aadad
=
aaadd
=
a^3 d^2
=
(a^4 +ad-a^2) d
\\[-1mm]
&=
a^4 d + a (cb+ad) - a^2 d
=
a (a^4 +ad - a^2)+ a(cb+ad)- a^2 d
\\[-1mm]
&=
a^5 + a^2 d - a^3 + acb = a^5 + a^2 d - a^3,
\\[-1mm]
a^2d \cdot  a^2d
&=
aadaad
=
aaadad
=
aaaadd
=
a^4  d^2
=
a (a^4 +ad -a^2)d
\\[-1mm]
&=
a^5d +a^2 d^2 - a^3d
=
a^5d + a^2 (cb + ad) -(a^4 +ad - a^2)
\\[-1mm]
&=
a^5d + a^3d - a^4 - ad + a^2
=
(a^2+1) a^3d - a^4 - ad + a^2
\\[-1mm]
&=
(a^2 +1) (a^4 + ad -a^2) - a^4 - ad  + a^2 =  a^6 + a^3d - a^4
\\[-1mm]
&=
a^6 + a^4 + ad -a^2 -a^4 = a^6 +  ad  - a^2,
\end{align*}
again using Lemma \ref{cv}.
\end{proof}

\begin{theorem}
We have the isomorphism $U(A^{CT}) \cong U(A^{C}) $.
\end{theorem}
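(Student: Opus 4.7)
Since $G^C \subseteq G^{CT} = G^T \cup G^C$, we have $I^C \subseteq I^{CT}$, and so there is a canonical surjection $\phi \colon U(A^C) \twoheadrightarrow U(A^{CT})$ induced by the identity map on $F\langle X\rangle$. The plan is to show that $\phi$ is an isomorphism by establishing the reverse containment $I^T \subseteq I^C$, from which $I^{CT} = I^C + I^T = I^C$ and the claimed equality of quotients follow at once.

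To verify $I^T \subseteq I^C$ it suffices to check that every generator of $G^T$ already vanishes in $U(A^C)$; that is, for each triple of matrix units $(E_{ij}, E_{kl}, E_{st})$ we must establish the translator relation
\[
e_{ij}\, e_{kl}\, e_{st} - e_{kl}\, e_{st}\, e_{ij} = \eta\bigl(\langle E_{ij}, E_{kl}, E_{st}\rangle\bigr)
\]
inside the $9$-dimensional quotient $U(A^C)$. A direct calculation in $A$ gives
\[
\langle E_{ij}, E_{kl}, E_{st}\rangle \;=\; \delta_{jk}\delta_{ls}\, E_{it} - \delta_{ls}\delta_{ti}\, E_{kj},
\]
which vanishes whenever $l \ne s$ and in many of the remaining $l=s$ cases as well. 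For those triples the identity reduces to $e_{ij}\, e_{kl}\, e_{st} = e_{kl}\, e_{st}\, e_{ij}$, and the vast majority of such instances are immediate from the eight vanishing products $ac = ad = ba = b^2 = cd = da = db = c^2 = 0$ recorded in Lemma~\ref{rr}.

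The remaining triples (those for which $\langle E_{ij}, E_{kl}, E_{st}\rangle \ne 0$, together with the handful of $l\ne s$ cases not ruled out by the vanishing products) are handled by a short mechanical calculation in $U(A^C)$ using the multiplication table given in the theorem preceding this one — or, equivalently, by reducing each of the $64$ elements of $G^T$ modulo the Gr\"obner basis \eqref{UACGB} of $I^C$ and verifying that every reduction yields $0$. The main obstacle is not conceptual but purely the bookkeeping of this case analysis; in practice it is most efficiently offloaded to the same noncommutative Gr\"obner basis routines \cite{BF} already employed in the constructions of $U(A^C)$ and $U(A^T)$ above.
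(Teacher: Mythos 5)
Your argument is correct, and it takes a genuinely different route from the paper's. The paper proves the theorem by running the full Gr\"obner basis machinery on $G^{CT}$ from scratch: it standardizes and self-reduces the $128$ generators to $44$, computes $133$ nontrivial compositions, and observes that the resulting self-reduced Gr\"obner basis coincides with the $16$-element basis \eqref{UACGB} of $I^C$, whence $I^{CT}=I^C$. You instead exploit the containment $I^C\subseteq I^{CT}$ (which is free) and reduce the whole problem to the single membership question $G^T\subseteq I^C$, which requires only normal-form reduction of $64$ elements modulo the already-computed Gr\"obner basis \eqref{UACGB} --- no new S-polynomial/composition computations at all. Your preliminary analysis is also slightly better than you claim: for every pair with $l\neq s$ the product $e_{kl}e_{st}$ is one of $ac, ad, ba, b^2, c^2, cd, da, db$, all of which vanish by Lemma \ref{rr}, so \emph{all} $32$ generators of $G^T$ with $l\neq s$ die immediately, leaving only the $32$ cases with $l=s$ for the mechanical check (e.g.\ $bca-cab-a+d$ reduces via $bc=a^2$, $a^3=a$, $cab=d$ to $0$). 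What your approach buys is economy and a cleaner logical structure, and it yields the stronger conclusion that $U(A^{CT})$ and $U(A^C)$ are literally the same quotient; what the paper's approach buys is a direct certification of a Gr\"obner basis for $I^{CT}$ itself, independent of the prior computation for $I^C$. The one caveat is that, like the paper, you defer the finite verification to the machine; to make the proof self-contained you should either tabulate the $32$ nontrivial reductions or state explicitly that they were checked.
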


\begin{proof}
The set $G^{CT}$ contains 128 elements; we put each generator in standard form
and self-reduce the set, obtaining the following 44 generators:
\[
\begin{array}{l@{\quad\;\;}l@{\quad\;\;}l@{\quad\;\;}l@{\quad\;\;}l}
aba {-} a^2b {+} b,&
 aca {-} a^2c,&
 acb {-} abc {+} a,&
 ada {-} a^2d,&
 adb {-} abd {+} b,
 \\
 adc {-} acd,&
 ba^2 {-} a^2b {+} b,&
 bab {-} ab^2,&
 bac {-} abc {+} a,&
 bad {-} abd {+} b,
 \\
  b^2a {-} ab^2,&
  bca {-} abc,&
   bcb {-} b^2c {-} b,&
 bda {-} abd {+} b,&
 bdb {-} bbd,
 \\
 bdc {-} bcd {-} a,&
 ca^2 {-} a^2c {-} c,&
  cab {-} abc {-} d {+} a,&
 cac {-} ac^2,&
 cad {-} acd,
 \\
  cba {-} abc {+} a,&
 cb^2 {-} b^2c,&
 cbc {-} bc^2 {-} c,&
 cbd {-} bcd {-} d,&
 c^2a {-} ac^2,
 \\
 c^2b {-} bc^2,&
 cda {-} acd,&
 cdb {-} bcd,&
 cdc {-} c^2d,&
  da^2 {-} a^2d,
 \\
  dab {-} abd {+} b,&
   dac {-} acd,&
 dad {-} ad^2,&
 dba {-} abd {+} b,&
 db^2 {-} b^2d,
\\
 dbc {-} bcd,&
 dbd {-} bd^2 {+} b,&
  dca {-} acd {-} c,&
 dcb {-} bcd {-} d,&
 dc^2 {-} c^2d,
 \\
 dcd {-} cd^2,&
 d^2a {-} ad^2,&
   d^2b {-} bd^2 {+} b,&
 d^2c {-} cd^2 {-} c.
 \end{array}
\]
There are 133 nontrivial compositions (omitted);
we include their normal forms, self-reduce, and obtain 16 generators,
coinciding with the Gr\"obner basis \eqref{UACGB}.
\end{proof}

\begin{conjecture}
If $C$ is a finite dimensional comtrans algebra (special or not)
then $U(C)$ is also finite dimensional.
\end{conjecture}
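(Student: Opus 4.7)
The plan is to reduce the conjecture to a finite-codimension statement for a commutative ideal via the associated graded construction. Choose a basis $c_1, \dots, c_n$ of $C$ and form $U(C) = F\langle e_1, \dots, e_n \rangle / I$, where $I$ is generated by the cubic-plus-linear relations
\[
r^{[]}_{ijk} := e_i e_j e_k - e_j e_i e_k - \textstyle\sum_l \lambda^l_{ijk}\, e_l, \qquad r^{\langle \rangle}_{ijk} := e_i e_j e_k - e_j e_k e_i - \textstyle\sum_l \mu^l_{ijk}\, e_l
\]
coming from the structure constants of $C$. First I would derive two implicit relations: $r^{[]}_{ijk}$ directly gives $(e_i e_j - e_j e_i) e_k \in \mathfrak{C} := \operatorname{span}(e_1, \dots, e_n)$, and subtracting $r^{[]} - r^{\langle \rangle}$ gives $e_j (e_k e_i - e_i e_k) \in \mathfrak{C}$. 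Writing $\omega_{pq} = e_p e_q - e_q e_p$ for the usual associative commutator, both $\omega_{pq} e_k$ and $e_k \omega_{pq}$ lie in $\mathfrak{C}$ for all $p, q, k$.

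Next, pass to the associated graded $\operatorname{gr}(U(C))$ under the standard length filtration $F_0 \subset F_1 \subset \cdots$ on $U(C)$. Since $\omega_{pq} e_k$ is represented by a degree-$3$ element of the free algebra but equals an element of $F_1$ in $U(C)$, its class in $F_3/F_2$ vanishes, yielding $e_i e_j e_k \equiv e_j e_i e_k$; analogously $e_i e_j e_k \equiv e_j e_k e_i$. Applied to three consecutive positions of a length-$d$ monomial, these symmetrizations generate the full symmetric group $S_d$ for $d \geq 3$, so $\operatorname{gr}_d(U(C))$ is a quotient of $\operatorname{Sym}^d(\mathfrak{C})$. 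Consequently, $\dim U(C) < \infty$ is equivalent to the graded ideal $\bar I \subset \operatorname{Sym}(\mathfrak{C})$ cutting out $\operatorname{gr}(U(C))$ having finite codimension in degrees $\geq 3$; by the Nullstellensatz and the homogeneity of the induced relations, this reduces to showing the affine variety $V(\bar I) \subset \mathbb{A}^n$ is the single point at the origin.

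The principal step, and the main obstacle, is to show $V(\bar I) = \{0\}$. My plan is to extract explicit elements of $\bar I$ by running the noncommutative Buchberger algorithm on the defining generators of $I$, tracking composition (S-polynomial) reductions back to $\operatorname{Sym}(\mathfrak{C})$ through the symmetrization map. The degree-$3$ top parts of $r^{[]}$ and $r^{\langle \rangle}$ vanish trivially in $\operatorname{Sym}^3(\mathfrak{C})$, so all useful relations arise from compositions and carry nontrivial dependence on the structure constants $\lambda^l_{ijk}, \mu^l_{ijk}$. A promising refinement is to exploit the multilinear identities of degree $5$ and $7$ satisfied by the special commutator and translator (Lemmas \ref{lemmadegree5commutator}--\ref{lemmadegree5translator}, Theorem \ref{degree5comtrans}, Lemmas \ref{commutatordegree7}--\ref{translatordegree7}), since these hold automatically in $U(C)$ and should accelerate the reduction to a finite-dimensional quotient.

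The hard part will be proving uniformly in $C$ that enough such relations are produced to force $V(\bar I) = \{0\}$; the observation that $U(A^T)$ is infinite dimensional while $U(A^{CT}) \cong U(A^C)$ has dimension $9$ strongly suggests that the interaction between both operations is essential, and a general proof may require Koszulity of the Smith operad (noted quadratic in the paper) combined with an explicit dimension bound on its Koszul dual, or a structural classification theorem for comtrans algebras that is not currently available.
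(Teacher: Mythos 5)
First, note that the statement you were asked to prove is presented in the paper as an open \emph{conjecture}: the authors give no proof, so there is no argument of theirs to compare yours against, and the only question is whether your proposal settles the problem on its own. It does not, although its preliminary reductions are correct and do establish something true. From $r^{[]}_{ijk}$ and $r^{[]}_{ijk}-r^{\langle\rangle}_{ijk}$ one indeed gets $\omega_{pq}e_k\in\mathfrak{C}$ and $e_k\omega_{pq}\in\mathfrak{C}$ modulo $I$, and since every adjacent transposition inside a monomial of length $d\ge 3$ occurs in a context of the form $u\,\omega_{pq}e_k\,v$ or $u\,e_k\omega_{pq}$, each $\operatorname{gr}_d U(C)$ with $d\ge 3$ is spanned by symmetrized monomials. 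This yields the genuine (but much weaker) conclusion $\dim\operatorname{gr}_dU(C)\le\binom{n+d-1}{d}$, i.e.\ $U(C)$ always has polynomial growth of Gelfand--Kirillov dimension at most $\dim C$; and your remark that both operations are needed is consistent with Lemma \ref{Ba}, where $U(A^T)$ is infinite dimensional. The passage to a homogeneous ideal $\bar I$ of $\mathrm{Sym}(\mathfrak{C})$ and the Nullstellensatz criterion $V(\bar I)=\{0\}$ (over $\overline{\mathbb{Q}}$) are likewise essentially sound.

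The gap is that the entire content of the conjecture has been relocated into the unproved claim $V(\bar I)=\{0\}$, and nothing in your proposal excludes the worst case $\bar I=0$, in which $\operatorname{gr}^{\ge 3}U(C)$ is all of $\mathrm{Sym}^{\ge 3}(\mathfrak{C})$ and $U(C)$ is infinite dimensional. Moreover, the ingredient you hope will generate elements of $\bar I$ cannot do so: the degree $5$ and degree $7$ identities for the special commutator and translator are, by construction, expressions that expand to $0$ in every associative algebra, so as elements of $F\langle X\rangle$ they are identically zero and contribute nothing to the ideal $I$. What they actually constrain is the kernel of the map $C\to U(C)$ (for exceptional $C$ this kernel is nonzero precisely because such identities may fail in $C$), which is irrelevant to bounding $\dim U(C)$. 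Running Buchberger symbolically on the structure constants, or invoking Koszulity of the Smith operad, are reasonable directions for future work but are not arguments. As written, the proposal proves polynomial growth of $U(C)$ and correctly isolates where the difficulty lies, but it does not prove the conjecture.
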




\end{document}